\declaretheoremstyle[%
headfont=\normalfont\bfseries,
within=section,headpunct={.\smallskip \newline}, 
bodyfont =\itshape, 
spaceabove = 8pt,spacebelow = 8pt]%
{mythmstyle}
\newcommand{\removelatexerror}{\let\@latex@error\@gobble}
\newcommand\numberthis{\addtocounter{equation}{1}\tag{\theequation}}
\theoremstyle{definition}
\newtheorem{definition}{Definition}
\newtheorem{assumption}{Assumption}
\theoremstyle{plain}
\declaretheorem[name=Theorem]{thm}
\declaretheorem[name=Lemma]{lem}
\declaretheorem[name=Proposition]{prop}
\declaretheorem[name=Corollary]{cor}
\theoremstyle{remark}
\newtheorem{remark}{Remark}
\newtheorem{example}{Example}
\DeclareMathOperator{\dom}{dom}
\DeclareMathOperator{\diag}{diag}
\DeclareMathOperator{\interior}{int}
\DeclareMathOperator{\lse}{lse}
\DeclareMathOperator{\vecmax}{vecmax}
\DeclareMathOperator{\gra}{gra}
\DeclareMathOperator{\Jacob}{\mathbf{J}}
\begin{document}
	\title{\LARGE \bf On the Properties of the Softmax Function with Application in Game Theory and Reinforcement Learning}
	\author{Bolin Gao and Lacra Pavel%
		\thanks{B. Gao and L. Pavel are with the Department of Electrical and Computer Engineering, University of Toronto, Toronto, ON, M5S 3G4, Canada. Emails:
			{\tt\small bolin.gao@mail.utoronto.ca, pavel@ece.utoronto.ca}}%
	}

\maketitle
\thispagestyle{plain}
\pagestyle{plain}
\IEEEpeerreviewmaketitle
	\begin{abstract}
	In this paper, we utilize results from convex analysis and monotone operator theory to derive additional properties of the softmax function that have not yet been covered in the existing literature. In particular, we show that the softmax function is the monotone gradient map of the log-sum-exp function. By exploiting this connection, we show that the inverse temperature parameter $\lambda$ determines the Lipschitz and co-coercivity properties of the softmax function. We then demonstrate the usefulness of these properties through an application in game-theoretic reinforcement learning.  
	\end{abstract}

	\section{Introduction}
	
    The softmax function is one of the most well-known functions in science and engineering and has enjoyed widespread usage in fields such as game theory \cite{Young, Sandholm, Goeree}, reinforcement learning \cite{Sutton} and machine learning \cite{Goodfellow, Bishop}.
    From a game theory and reinforcement learning perspective, the softmax function maps the raw payoff or the score (or Q-value) associated with a payoff to a mixed strategy \cite{Young, Sandholm, Sutton}, whereas from the perspective of multi-class logistic regression, the softmax function maps a vector of logits (or feature variables) to a posterior probability distribution \cite{Goodfellow, Bishop}. The broader engineering applications involving the softmax function are numerous; interesting examples can be found in the fields of VLSI and neuromorphic computing, see \cite{Zuino, Yuille, Elfadel, Genewein}. 
	
	The term ``softmax" is a portmanteau of ``soft" and  ``argmax" \cite{Goodfellow}. The function first appeared in the work of Luce \cite{Luce}, although its coinage is mostly credited to Bridle \cite{Bridle}. Depending on the context in which the softmax function appears, it also goes by the name of Boltzmann distribution \cite{Young, Sutton, Kou}, Gibbs map \cite{Coucheney_Learning, Laraki_Higher_Game}, logit map, logit choice rule, logit response function \cite{Young, Sandholm, Goeree, Hof_Hop, qre, Merti_Learning, Cominetti} or (smooth) perturbed best response function \cite{Leslie, Hopkins_Learning}. The reader should take care in distinguishing the softmax function used in this paper from the log-sum-exp function, which is often also referred to as the ``softmax" (since the log-sum-exp is a soft approximation of the vector-max function \cite{Boyd, Wets}).
	
	There are many factors contributing to the wide-spread usage of the softmax function. In the context of reinforcement learning, the softmax function ensures a trade-off between exploitation and exploration, in that every strategy in an agent's possession has a chance of being explored. Unlike some other choice mechanisms such as $\epsilon$-greedy \cite{Sutton}, the usage of softmax selection rule\footnote{In this paper, we refer to the softmax function interchangeably as the softmax operator, softmax map, softmax choice, softmax selection rule, or simply, the softmax.} is favorably supported by experimental literature in game theory and reinforcement learning as a plausible model for modeling real-life decision-making. For instance, in \cite{Kianercy}, the authors noted that the behavior of monkeys during reinforcement learning experiments is consistent with the softmax selection rule. Furthermore, the input-output behavior of the softmax function has been compared to lateral inhibition in biological neural networks \cite{Goodfellow}. For additional discussions on the connections between softmax selection rule and the neurophysiology of decision-making, see \cite{Daw, Lee1, Cohen, Bossaerts}. From the perspective of game theory, the softmax function characterizes the so-called ``logit equilibrium", which accounts for incomplete information and random perturbation of the payoff during gameplay and has been noted for having better versatility in describing the outcomes of gameplay as compared to the Nash equilibrium \cite{Goeree, qre}.

	\begin{figure}[htp!]
		\begin{center}
			\includegraphics[scale=0.8]{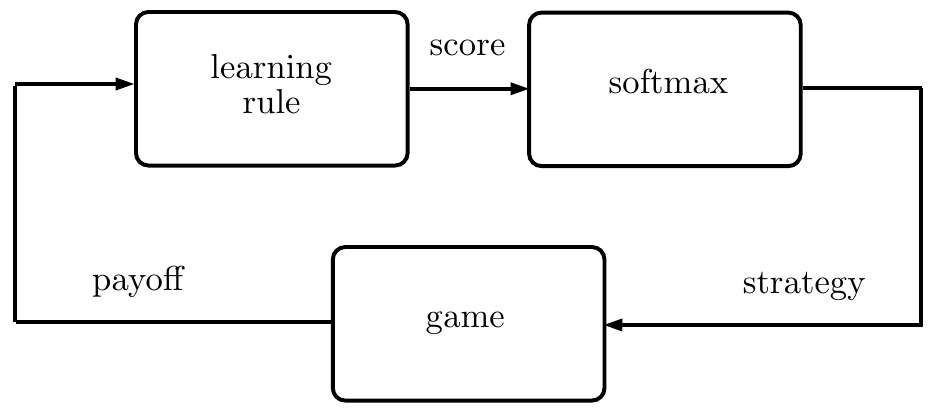}
		\end{center}
		\caption{High-level representation of a game-theoretic multi-agent reinforcement learning scheme with the softmax selection rule. In this learning scenario, the players each choose some strategy, play the game and receive real-valued payoffs. The players then use some learning rule to independently convert the payoffs into scores. Finally, each player uses the softmax to select the next strategy.} 
		\label{fig:Ctrl_RL_Sys}
	\end{figure}
	
	Despite the intuitions that researchers have acquired with respect to the usage of the softmax function, it is apparent that the understanding of its mathematical properties is still lacking. For instance, in the analysis of stateless multi-agent reinforcement learning schemes (\autoref{fig:Ctrl_RL_Sys}), when the action selection rule is taken as the softmax function, it is of interest which, if any, properties of softmax can allow us to conclude convergence of the learning algorithm towards a solution of the game (e.g., a Nash or logit equilibrium). Although the desired properties that can be used to conclude such convergence are fairly mundane, virtually no reference to these properties can be found within the existing body of literature. With regard to applications in the context of reinforcement and machine learning, the adjustment of the temperature constant of the softmax function is still performed on a rule-of-thumb basis. It has also been briefly speculated in \cite{Varying_Temp} that proper adjustment of the temperature constant can be used for game-theoretic reinforcement learning algorithms to achieve higher expected payoff. Therefore, an adaptive mechanism for scheduling the temperature constant would be desirable for many applications. Clearly, these questions can only be affirmatively answered by uncovering new properties of the softmax function.
	
	The goal of this paper is to expand on the known mathematical properties of the softmax function and demonstrate how they can be utilized to conclude the convergence of learning algorithm in a simple application of game-theoretic reinforcement learning. For additional examples and more involved applications, see our related paper \cite{Gao}. We perform our analysis and derive new properties by using tools from convex analysis \cite{Boyd, Wets} and monotone operator theory \cite{Bauschke, Facchinei}. It has been known that stateless multi-agent reinforcement learning that utilizes the softmax selection rule has close connections with the field of evolutionary game theory \cite{Bloembergen, Weiss, Coucheney_Learning, Merti_Learning, Sato, Kianercy, Tuyls}. Therefore, throughout this paper, we motivate some of the results through insights from the field of evolutionary game theory \cite{Price, Hofbauer_Sigmund, Weibull}.
	It is our hope that researchers across various disciplines can apply our results presented here to their domain-specific problems.

    The organization of this paper is as follows. Section II introduces notation convention for the rest of the paper. Section III introduces the definition of the softmax function, its different representations as well as a brief survey of several of its known properties from the existing literature. Section IV provides the background to convex optimization and monotone operator theory. In Section V, we derive additional properties of the softmax function. Section VI provides an analysis of a stateless continuous-time score-based reinforcement learning scheme within a single-player game setup to illustrate the application of these properties. Section VII provides the conclusion and some open problems for further investigation.

	\section{Notations}

	\noindent The notations used in this paper are as follows:
	
	\begin{itemize}[leftmargin = *]
		\item The $p$-norm of a vector is denoted as $\|\cdot\|_p$,  $1\leq p \leq \infty$.
		\item The $n-1$ dimensional unit simplex is denoted by $\Updelta^{n-1}$, where, $\Updelta^{n-1} \coloneqq \{x \in \mathbb{R}^n| \|x\|_1 = 1, x_i \geq 0\}.$
		\item The (relative) interior of $\Updelta^{n-1}$ is denoted by $\interior(\Updelta^{n-1})$, where, $\interior(\Updelta^{n-1}) \coloneqq \{x \in \mathbb{R}^n| \|x\|_1 = 1, x_i > 0\}.$
		\item $e_i \in \mathbb{R}^n$ denotes the $i^\text{th}$ canonical basis of $\mathbb{R}^n$, e.g., $e_i = \begin{bmatrix} 0, \ldots, 1, \ldots, 0 \end{bmatrix}^\top$, where $1$ occupies the $i^\text{th}$ position. 
		\item The vector of ones is denoted as $\mathbf{1} \coloneqq \begin{bmatrix} 1, \ldots, 1 \end{bmatrix}^\top$ and the vector of zeros is denoted as $\mathbf{0} \coloneqq \begin{bmatrix} 0, \ldots, 0 \end{bmatrix}^\top$. 
		\item Matrices are denoted using bold capital letters such as $\mathbf{A}$.
	\end{itemize}
	In general, a vector in the unconstrained space $\mathbb{R}^n$ will be denoted using $z$, while a vector in the $n-1$ dimensional unit simplex will be denoted using $x$. All logarithms are assumed to be base $e$.
	
	\section{Review of the Softmax Function and its Known Properties}
	While the \textit{softmax function} may take on different appearances depending on the application, its base model is that of a vector-valued function, whose individual component consists of an exponential evaluated at an element of a vector, which is normalized by the summation of the exponential of all the elements of that vector. In this section, we present several well-known and equivalent representations of the softmax function, and review some of its properties that are either immediate based on its definition or have been covered in the existing literature. 
	
	\subsection{Representations of the Softmax function} 
	
	The most well-known and widely-accepted version of the softmax function is as follows \cite{Goodfellow, Elfadel, Michalis, Naomi, Sparsemax, Tokic}.
	\begin{definition} The softmax function is given by $\sigma: \mathbb{R}^n \to \interior(\Updelta^{n-1})$,  	
	\begin{equation}
		\sigma(z) \coloneqq  \dfrac{1}{\textstyle\sum\limits_{j = 1}^n \exp(\lambda z_j)} \begin{bmatrix}\exp(\lambda z_1)  \\ \vdots \\ \exp(\lambda z_n) \end{bmatrix}, \lambda > 0,
		\label{def:softmax}
	\end{equation}
	where $\lambda$ is referred to as the \textit{inverse temperature constant}.
	\end{definition}
	\begin{remark} The softmax function is commonly presented in the literature as the individual components of \eqref{def:softmax}, 
	\begin{equation}
	\sigma_i(z) \coloneqq  \frac{\exp(\lambda z_i)}{\sum\limits_{j =1}^n \exp(\lambda z_j)}, 1 \leq i \leq n.
	\label{eqn:softmax_comp}
	\end{equation}
	\end{remark} 
	
	When $\lambda = 1$, we refer to \eqref{def:softmax} as the \textit{standard softmax function}. As $\lambda \to 0$, the output of $\sigma$ converges point-wise to the center of the simplex, i.e., a uniform probability distribution. On the other hand, as $\lambda \to \infty$, the output of $\sigma$ converges point-wise to $e_j \in \mathbb{R}^n$, where $j = \underset{1\leq i \leq n}{\text{argmax}}\thinspace e_i^\top z$, provided that the difference between two or more components of $z$ is not too small \cite{Merti_Learning, Elfadel}. We note that elsewhere in the literature, the reciprocal of $\lambda$ is also commonly used.

	\begin{remark} In $\mathbb{R}^2$, \eqref{eqn:softmax_comp} reduces to the \textit{logistic  function} in terms of $z_i - z_j$, \begin{equation} \sigma_i(z) =  \frac{\exp(\lambda z_i)}{\exp(\lambda z_i) + \exp(\lambda z_j)} =\frac{1}{1 + \exp(-\lambda (z_i-z_j))}, j \neq i \label{eqn:softmax_logistic}. \end{equation} Furthermore, we note that \eqref{eqn:softmax_comp} can be equivalently represented as, 
	\begin{equation} \sigma_i(z) = \exp(\lambda z_i - \log(\textstyle\sum_{j =1}^n \exp(\lambda z_j))).
	\label{eqn:softmax_rep_2}
	\end{equation} 
While \eqref{eqn:softmax_rep_2} is seldom used as a representation of the softmax function, in \cite{Harper}, the author noted that \eqref{eqn:softmax_rep_2} represents an \textit{exponential family}, which is the solution of the \textit{replicator dynamics} of evolutionary game theory \cite{Sandholm, Hofbauer_Sigmund, Weibull}. We will expand on the connections between the replicator dynamics and the softmax function in section V. 
	\end{remark}

	\begin{figure*}[htp!]
		\begin{center}
			\includegraphics[scale=0.80]{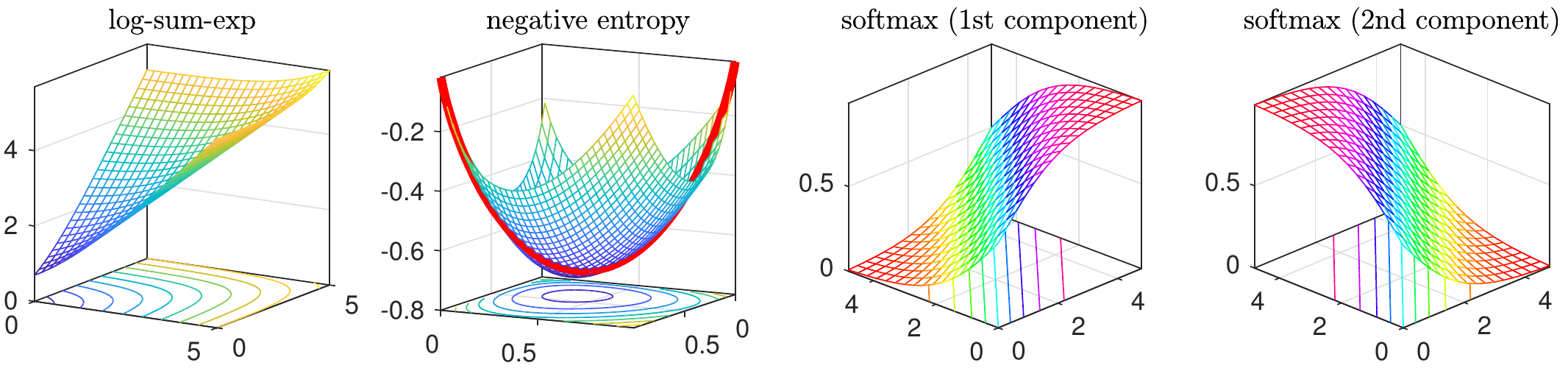}
			\caption{Plots of the log-sum-exp, negative entropy and both components of softmax function over $\mathbb{R}^2$ with $\lambda = 1$. The red curve on the negative entropy plot is the restriction of the negative entropy over the 1-dimensional simplex, $\Updelta^1$.}
			\label{fig:lse_nle_smx}
		\end{center}
	\end{figure*} 

	Another important representation of the softmax function can be derived by considering the ``argmax function" under entropy regularization.\footnote{As pointed out in \cite[p. 182]{Goodfellow}, the softmax function is a soft approximation of the argmax function, $z \mapsto \underset{x \in \Updelta^{n-1}}{\text{argmax}} \thinspace {x}^\top z$, not that of the ``max" function.} Let $z \in \mathbb{R}^n$, and consider the argmax of $x^\top z$ over the simplex,
	\begin{equation} 
	M(z) \coloneqq \underset{x \in \Updelta^{n-1}}{\text{argmax}} \thinspace {x}^\top z.
	\label{def:argmax_fcn}
	\end{equation}

	When there is a unique largest element in the vector $z$, it is clear that $M$ returns the basis vector corresponding to the entry of that element, that is, $M(z) = e_j$, where $j = \underset{1\leq i \leq n}{\text{argmax}}\thinspace e_i^\top z$. This solution corresponds to a vertex of the simplex. In general, however, \eqref{def:argmax_fcn} is set-valued; to see this, simply consider the case where two or more components of $z$ are equal.
	
	For many learning related applications, it is highly desirable for $M(z)$ to be singled-valued \cite{Coucheney_Learning, Merti_Learning, Naomi, Shalev, Hazan}. The most common approach to achieve this is by employing a so-called \textit{regularizer} function $\psi$ to \eqref{def:argmax_fcn}, which yields the \textit{regularized argmax function}:\footnote{Depending on the context, the regularizer is also referred to as an admissible deterministic perturbation \cite[p. 189]{Sandholm}, penalty function \cite{Coucheney_Learning, Merti_Learning}, smoothing function \cite{Leslie} or Bregman function \cite{Beck_Entropy}. For detailed construction of the regularizer, see \cite{Coucheney_Learning, Merti_Learning, Alvarez}.}
	\begin{equation} 
	\widetilde{M}(z) \coloneqq  \underset{x \in \Updelta^{n-1}}{\text{argmax}} \thinspace \left[{x}^\top z - \psi(x)\right].
	\label{eqn:Regularized_BR_map}
	\end{equation}
	
	A common choice of the regularizer is the \textit{negative entropy function restricted to the simplex}, which under the convention $0\log(0) = 0$, is given by $\psi:\mathbb{R}^n \to \mathbb{R}\cup\{+\infty\}$,
	\begin{equation} 
	\psi(x) \coloneqq \begin{cases} \lambda^{-1}\textstyle\sum\limits_{j =1}^n x_j \log(x_j), \lambda > 0 & x \in \Updelta^{n-1} \\
	   +\infty & x\notin \Updelta^{n-1}. \end{cases}	\label{def:neg_log_entropy}
	\end{equation} When $\lambda = 1$, we refer to \eqref{def:neg_log_entropy} as the \textit{standard negative entropy function.} 
	
	Since negative entropy is $\lambda^{-1}$-strongly convex\footnote{Recall that a function $f$ is $\mu$-strongly convex in $\|\cdot\|_{p}$ if there exists $\mu > 0$, s.t. $f(\theta z + (1-\theta)z^\prime) \leq \theta f(z) + (1-\theta)f(z^\prime) - \dfrac{\mu}{2}\theta(1-\theta)\|z - z^\prime\|_p^2$ for all $z,z^\prime \in \dom f$ and $\theta \in [0,1]$. $f$ is $\mu$-strongly concave if $-f$ is $\mu$-strongly convex.} in $\|\cdot\|_1$ over $\interior (\Updelta^{n-1})$ \cite{Beck_Entropy}, by strong concavity of the argument of \eqref{eqn:Regularized_BR_map}, it can be shown that by invoking the Karush-Kuhn-Tucker (KKT) conditions, the unique maximizer of \eqref{eqn:Regularized_BR_map} is the softmax function evaluated at $z \in \mathbb{R}^n$, i.e.,
	\begin{equation}
	 \underset{x \in \Updelta^{n-1}}{\text{argmax}} \thinspace [{x}^\top z - \lambda^{-1}\textstyle\sum\limits_{j =1}^n x_j \log(x_j)]  = \sigma(z).
	\label{eqn:softmax_rep_3}
	\end{equation}
	 It has been noted in \cite{Kianercy, Genewein, Rang, Shen_Dual} that the argument of the left-hand side of \eqref{eqn:softmax_rep_3},
	\begin{equation} {x}^\top z - \lambda^{-1}\textstyle\sum\limits_{j =1}^n x_j \log(x_j),
		\label{eqn:softmax_rep_5} \end{equation}
	represents the so called ``free energy" in statistical thermodynamics. In light of this connection, from a game-theoretic perspective, the softmax function can be thought of as providing the \textit{mixed strategy} with the maximum entropy which maximizes the payoff of a game \cite{Kianercy}. 
	
	It is also worth noting that the maximum of \eqref{eqn:softmax_rep_5} over the simplex is by definition the \textit{Legendre-Fenchel transform} of the negative entropy function \cite[p. 102]{Wets}, also commonly referred to as the \textit{log-sum-exp function}, which is given by $\lse: \mathbb{R}^n \to \mathbb{R} \cup \{+\infty\}$,
	\begin{equation}
		\label{def:log_sum_exp}
		\lse(z) \coloneqq  \lambda^{-1}\log(\textstyle\sum\limits_{j = 1}^n \exp(\lambda z_j)), \lambda > 0.
	\end{equation}
	When $\lambda = 1$, we refer to \eqref{def:log_sum_exp} as the \textit{standard log-sum-exp function}. 
	
	It is well-known that the log-sum-exp is an approximation to the \textit{vector-max} function \cite[p. 72]{Boyd}, \cite[p. 27]{Wets}, \[\vecmax(z) \coloneqq \max\{z_1, \ldots, z_n\}.\] That is, for any $z \in \mathbb{R}^n$, $\vecmax(z) \leq \lse(z) \leq \vecmax(z) + \lambda^{-1}\log(n)$, which can be shown by considering $\exp(\lambda \vecmax(z)) \leq \sum\limits_{j = 1}^n \exp(\lambda z_j) \leq n \exp(\lambda \vecmax(z)).$ Due to this reason, the log-sum-exp is sometimes referred to as the ``softmax function" in optimization-oriented literature. 
	
	We note that the dual or convex conjugate of the log-sum-exp function \eqref{def:log_sum_exp} is the negative entropy restricted to the simplex, given by \eqref{def:neg_log_entropy} \cite[p. 93]{Boyd}\cite[p. 482]{Wets}\cite{Shen_Dual}. 
	We illustrate the log-sum-exp function as well as the negative entropy and the softmax function in \autoref{fig:lse_nle_smx}. By \textit{Fenchel-Young} inequality, the log-sum-exp function is bounded below by a linear function,
	\begin{equation}
		\label{eqn:lse_ne_fenchel_young}
		\lse(z) \geq x^\top z - \psi(x), \forall x \in \Updelta^{n-1}, z \in \mathbb{R}^n.
	\end{equation}	
	Further consequences of the duality between the negative entropy and the log-sum-exp function as well as its role in game theory will not be explored at this time. Interested readers may refer to \cite{Elfadel2, Shen_Dual} or any standard textbooks on convex analysis, for example, \cite{Boyd, Wets, Hiriart-Urruty}.

	Finally, we provide a probabilistic characterization of the softmax function. Let $\epsilon_i, i \in \{1, \ldots, n\}$ be independent and identically distributed random variables with a \textit{Gumbel distribution} given by,
	\begin{equation}
		  \Pr[\epsilon_i \leq c] = \exp(-\exp(-\lambda c-\gamma)),
	\end{equation} 
	where $\gamma \approx 0.57721$ is the Euler-Mascheroni constant. It can be shown that for any vector $z \in \mathbb{R}^n$ \cite[p. 194]{Sandholm}\cite{Hof_Hop}, 
	\begin{equation}
	 \Pr\left[i = \underset{1 \leq j \leq n}{\text{argmax}} \thinspace z_j + \epsilon_j\right] =  \sigma_i(z).
	\label{eqn:softmax_rep_4}
	\end{equation}
	In game theory terms, \eqref{eqn:softmax_rep_4} represents the probability of choosing the \textit{pure strategy} that maximizes the payoff or score $z \in \mathbb{R}^n$, after the payoff or score has been perturbed by a stochastic perturbation.  
	\subsection{Properties of the Softmax - State of the Art}
	We  briefly comment on some properties of the softmax function that are either immediate or have been covered in the existing literature. First, $\sigma$ maps the origin of $\mathbb{R}^n$ to the barycenter of $\Updelta^{n-1}$, that is, $\sigma(\mathbf{0}) = n^{-1}\mathbf{1}$. The softmax function $\sigma$ is surjective but not injective, as it can easily be shown that for any $z, z+c\mathbf{1} \in \mathbb{R}^n$, $\forall c \in \mathbb{R}$, we have $\sigma(z+c\mathbf{1}) = \sigma(z)$. By definition, $\|\sigma(z)\|_1 = \sigma(z)^\top\mathbf{1} =  1, \forall z \in \mathbb{R}^n$. 
	
	In a recent paper, the authors of \cite{Sparsemax} noted that $\sigma(\mathbf{P}(z)) = \mathbf{P}\sigma(z)$, where $\mathbf{P}$ is any permutation matrix, and that the standard softmax function satisfies a type of ``coordinate non-expansiveness" property, whereby given a vector $z \in \mathbb{R}^n$, and suppose that $z_j \geq z_i$, then $0 \leq \sigma_j(z) - \sigma_i(z) \leq \dfrac{1}{2}(z_j - z_i)$. The last property can be derived by exploiting the properties of the hyperbolic tangent function. It was also noted that these properties of the softmax function bear similarities with the Euclidean projection onto $\Updelta^{n-1}$ \cite{Sparsemax}.

	In a direction that is tangential to the aim of this paper, the authors of \cite{Michalis} is interested in finding a bound on the softmax function. It can be shown that,
	\begin{equation}
		\sigma_i(z) = \frac{\exp(\lambda z_i)}{\textstyle\sum\limits_{j =1}^n \exp(\lambda z_j)} \geq \prod\limits_{\substack{j = 1\\ j\neq i}}^n \frac{1}{1+\exp(-\lambda(z_i - z_j))}, 
		\label{eqn:one_vs_each_bound}
	\end{equation}
	where \eqref{eqn:one_vs_each_bound} is referred as ``one-vs-each" bound, which can be generalized to bounds on arbitrary probabilities \cite{Michalis}. From \eqref{eqn:softmax_logistic}, we see that this inequality is tight for $n = 2$.
		
	\section{Review of Convex Optimization and Monotone Operator Theory}
    In this section we review some of the definitions and results from convex optimization and monotone operator theory that will be used in the derivation of new properties of the softmax function. Since the following definitions are standard, readers who are familiar with these subjects can skip this section without any loss of continuity. Most of the proofs of the propositions in this section can be found in references such as \cite{Boyd, Wets, Bauschke, Facchinei, Peypouquet, Hiriart-Urruty}. Throughout this section, we assume that $\mathbb{R}^n$ is equipped with the standard inner product $\langle z, z^\prime \rangle \coloneqq \sum\limits_{i =1}^n z_iz^\prime_i$ with the induced 2-norm $\|z\|_2\coloneqq \sqrt{\langle z, z \rangle}$. We assume the domain of $f$, $\dom f$, is convex. $C^1, C^2$ denote the class of continuously-differentiable and twice continuously-differentiable functions, respectively.
	
	\begin{definition} 
		A function $f: \dom f \subseteq \mathbb{R}^n \to \mathbb{R}$ is convex if,
			\begin{equation}
			f(\theta z + (1-\theta)z^\prime) \leq \theta f(z) + (1-\theta)f(z^\prime),
			\label{def:convex}
			\end{equation}
			for all $z,z^\prime \in \dom f$ and $\theta \in [0,1]$
		and strictly convex if \eqref{def:convex} holds strictly whenever $z \neq z^\prime$ and $\theta \in (0,1)$.
	\end{definition}
	
	The convexity of a $C^2$ function $f$ is easily determined through its Hessian $\nabla^2 f$. 
	\begin{lem} 
		\label{lem:convex_c2_characterization}
		Let $f$ be $C^2$. Then $f$ is convex if and only if $\dom f$ is convex and its Hessian is positive semidefinite, that is, for all $z \in \dom f, v \in \mathbb{R}^n$,
			\begin{equation}
			v^\top\nabla^2 f(z)v \geq 0,
			\end{equation}
		and strictly convex if $\nabla^2 f(z)$ is positive definite for all $z \in \dom f$. 
	\end{lem}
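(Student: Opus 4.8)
The plan is to reduce this multivariate statement to the one-dimensional case by restricting $f$ to line segments, and then invoke the elementary second-order characterization of convexity for functions of a single real variable.

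First I would fix $z \in \dom f$ and a direction $v \in \mathbb{R}^n$, and define $g(t) \coloneqq f(z + tv)$ on the set $I_{z,v} \coloneqq \{t \in \mathbb{R} \mid z + tv \in \dom f\}$, which is an interval precisely because $\dom f$ is convex. Since $f$ is $C^2$, so is $g$, and the chain rule gives $g'(t) = \nabla f(z+tv)^\top v$ and $g''(t) = v^\top \nabla^2 f(z+tv)\, v$. The key reduction, immediate from the definition \eqref{def:convex}, is that $f$ is convex on $\dom f$ if and only if $g$ is convex on $I_{z,v}$ for every choice of $z$ and $v$: the forward direction follows by restricting \eqref{def:convex} to points of the form $z + t_1 v$, $z + t_2 v$, and the converse follows by taking $z = z'$, $v = z'' - z'$ for arbitrary $z', z'' \in \dom f$ and evaluating convexity of $g$ at the endpoints $t = 0,1$.

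Next I would use the classical one-variable fact that a $C^2$ function $g$ on an interval is convex if and only if $g'' \geq 0$ there; this holds because $g'' \geq 0$ is equivalent to $g'$ being nondecreasing, from which the first-order inequality $g(s) \geq g(t) + g'(t)(s-t)$ (and hence convexity) follows by the mean value theorem, while conversely convexity forces the difference quotients of $g$ to be monotone, so $g'' \geq 0$ in the limit. Combining with the previous step: $f$ is convex $\iff$ $g''(t) \geq 0$ for all admissible $z,v,t$ $\iff$ $v^\top \nabla^2 f(w)\, v \geq 0$ for all $w \in \dom f$ and $v \in \mathbb{R}^n$, where the last equivalence uses $t = 0$ and lets $w = z$ range over $\dom f$. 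This establishes the stated equivalence for convexity.

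For the strict-convexity claim I would argue only sufficiency, as the statement requires: if $\nabla^2 f(w) \succ 0$ for every $w \in \dom f$, then for any $z \neq z'$ in $\dom f$, setting $v = z' - z \neq 0$ gives $g''(t) = v^\top \nabla^2 f(z+tv)\, v > 0$ on $I_{z,v}$, hence $g$ is strictly convex there, and evaluating strict convexity of $g$ at $t = 0,1$ yields the strict form of \eqref{def:convex} for $f$. I do not anticipate a genuine obstacle, since the result is standard; the only point that needs a little care is the domain bookkeeping — ensuring $I_{z,v}$ is a bona fide interval so that the one-dimensional theory applies — which is exactly where the hypothesis that $\dom f$ is convex enters, and it is worth noting, as the statement correctly reflects, that positive definiteness of the Hessian is sufficient but not necessary for strict convexity.
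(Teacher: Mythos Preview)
Your argument is correct and is precisely the standard textbook proof (reduce to one variable via $g(t)=f(z+tv)$, then use the elementary second-order test). The paper itself does not supply a proof of this lemma at all: it is stated in the background section with the blanket remark that ``most of the proofs of the propositions in this section can be found in references such as \cite{Boyd, Wets, Bauschke, Facchinei, Peypouquet, Hiriart-Urruty}.'' So there is nothing to compare against beyond noting that your line-restriction argument is exactly the one given in, e.g., \cite[\S3.1.4]{Boyd}, which the paper cites.
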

	
	Next, we introduce the concept of a monotone operator and its related properties. A monotone operator is usually taken as a set-valued relation, however, it is also natural for the definitions related to a monotone operator to be directly applied to single-valued maps \cite{Facchinei}.
	
	\begin{definition} (\cite[p. 154]{Facchinei})
		An operator (or mapping) $F: \mathcal{D} \subseteq \mathbb{R}^n \to \mathbb{R}^n$ is said to be:
		\begin{itemize}[leftmargin = *]
			\item pseudo monotone on $\mathcal{D}$ if, 
			\begin{equation}
				F(z^\prime)^\top(z-z^\prime) \geq 0 \implies F(z)^\top(z-z^\prime) \geq 0, \forall z,z^\prime \in \mathcal{D}.
			\end{equation}
			\item pseudo monotone plus on $\mathcal{D}$ if it is pseudo monotone on $\mathcal{D}$ and,
			\begin{equation}
			\begin{split}
			&F(z^\prime)^\top(z-z^\prime) \geq 0 \text{ and } F(z)^\top(z-z^\prime) = 0\\ \implies &F(z) = F(z^\prime), \forall z,z^\prime \in \mathcal{D}.
			\end{split}
			\end{equation}
			\item monotone on $\mathcal{D}$ if,
			\begin{equation}
			(F(z) - F(z^\prime))^\top(z-z^\prime) \geq 0, \forall z,z^\prime \in \mathcal{D}.
			\end{equation}
			\item monotone plus on  $\mathcal{D}$ if it is monotone on $\mathcal{D}$ and,
			\begin{equation}
			(F(z) - F(z^\prime))^\top(z-z^\prime) = 0 \implies F(z) = F(z^\prime), \forall z,z^\prime \in \mathcal{D}.
			\end{equation}
			\item strictly monotone on $\mathcal{D}$ if, 
			\begin{equation}
			(F(z) - F(z^\prime))^\top(z-z^\prime) > 0, \forall z,z^\prime \in \mathcal{D}, z \neq z^\prime.
			\end{equation}
		\end{itemize}
	\end{definition}
	Clearly, strictly monotone implies monotone plus, which in turn implies monotone, pseudo monotone plus and pseudo monotone. By definition, every strictly monotone operator is an injection.  We refer to an operator $F$ as being \textit{(strictly) anti-monotone} if $-F$ is (strictly) monotone. The following proposition provides a natural connection between $C^1$, convex functions and monotone gradient maps. 
	
	\begin{lem}
		\label{lem:convexity_and_monotonicity}
		A $C^1$ function $f$ is convex if and only if 
			\begin{equation}
			(\nabla f(z) - \nabla f(z^\prime))^\top(z - z^\prime) \geq 0, \forall z, z^\prime \in \dom f,
			\end{equation} and strictly convex if and only if, 
			\begin{equation}
			(\nabla f(z) - \nabla f(z^\prime))^\top(z - z^\prime) > 0, \forall z, z^\prime \in \dom f, z \neq z^\prime.
			\end{equation}
	\end{lem}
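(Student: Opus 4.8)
The statement to prove is \autoref{lem:convexity_and_monotonicity}, the equivalence between convexity of a $C^1$ function $f$ and monotonicity of its gradient $\nabla f$, with the strict versions matching up as well. The plan is to prove both implications of the non-strict version, and then indicate the modifications needed for the strict case. For the forward direction (convexity implies monotone gradient), I would start from the first-order characterization of convexity: a $C^1$ function $f$ is convex if and only if $f(z') \geq f(z) + \nabla f(z)^\top (z' - z)$ for all $z, z' \in \dom f$. (This is itself standard and stated in the references cited; I would invoke it rather than re-derive it.) Writing this inequality once with the roles of $z$ and $z'$ as given, and once with the roles swapped, yields $f(z') - f(z) \geq \nabla f(z)^\top(z'-z)$ and $f(z) - f(z') \geq \nabla f(z')^\top(z-z')$. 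Adding the two inequalities, the left-hand side telescopes to $0$, and rearranging gives $(\nabla f(z) - \nabla f(z'))^\top(z - z') \geq 0$, which is exactly monotonicity of $\nabla f$.

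For the reverse direction (monotone gradient implies convexity), I would again use the first-order characterization of convexity and show that the gradient inequality it requires holds. Fix $z, z' \in \dom f$ and define $\varphi: [0,1] \to \mathbb{R}$ by $\varphi(t) \coloneqq f(z' + t(z - z'))$, which is well defined since $\dom f$ is assumed convex. Then $\varphi$ is $C^1$ with $\varphi'(t) = \nabla f(z' + t(z-z'))^\top (z - z')$. By the fundamental theorem of calculus, $f(z) - f(z') = \varphi(1) - \varphi(0) = \int_0^1 \varphi'(t)\, dt = \int_0^1 \nabla f(z' + t(z-z'))^\top(z-z')\, dt$. Now I would subtract $\nabla f(z')^\top(z-z')$ from both sides, write it as $\int_0^1 \nabla f(z')^\top(z-z')\, dt$, and combine under a single integral to get $f(z) - f(z') - \nabla f(z')^\top(z-z') = \int_0^1 \big(\nabla f(z' + t(z-z')) - \nabla f(z')\big)^\top(z-z')\, dt$. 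The key point is that monotonicity, applied to the pair of points $z' + t(z-z')$ and $z'$, gives $\big(\nabla f(z' + t(z-z')) - \nabla f(z')\big)^\top\big(t(z-z')\big) \geq 0$, hence the integrand is $\geq 0$ for each $t \in (0,1]$. Therefore the integral is nonnegative, establishing $f(z) \geq f(z') + \nabla f(z')^\top(z-z')$ for all $z, z'$, which is the first-order condition for convexity.

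For the strict versions, the forward direction (strict convexity implies strict monotonicity) proceeds identically, using the strict first-order characterization $f(z') > f(z) + \nabla f(z)^\top(z'-z)$ for $z \neq z'$; adding the two strict inequalities yields the strict monotonicity inequality. The reverse direction requires slightly more care: strict monotonicity makes the integrand above strictly positive for all $t \in (0,1]$ when $z \neq z'$ (since $t(z-z') \neq 0$), so the integral is strictly positive, giving the strict first-order condition, which in turn is equivalent to strict convexity. I expect the main subtlety — rather than a genuine obstacle — to be bookkeeping the composition-with-a-line argument cleanly and making sure the monotonicity inequality is applied to the correctly scaled pair of points so that the factor of $t > 0$ can be divided out without affecting the sign; everything else is a routine telescoping/integration argument, and the deeper fact (the first-order characterization of convexity itself) is quoted from the standard references already cited in the section.
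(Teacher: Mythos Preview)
Your proof is correct and is exactly the standard textbook argument. Note, however, that the paper does not actually prove this lemma: it appears in the review section on convex optimization and monotone operator theory, where the authors explicitly state that ``most of the proofs of the propositions in this section can be found in references such as \cite{Boyd, Wets, Bauschke, Facchinei, Peypouquet, Hiriart-Urruty}.'' So there is no paper-proof to compare against; your argument is precisely the one those references give (e.g., \cite[Prop.~17.10]{Bauschke} or \cite[Thm.~2.1.3]{Nesterov}), and it would serve perfectly well if the paper had chosen to include a proof.
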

	Next, we introduce the notions of Lipschitz continuity and co-coercivity, and show that the two concepts are related through the gradient of a convex function. 
	
	\begin{definition} 
		An operator (or mapping) $F: \mathcal{D} \subseteq \mathbb{R}^n \to \mathbb{R}^n$ is said to be 
		\begin{itemize}[leftmargin=*]
			\item \textit{Lipschitz} (or \textit{$L$-Lipschitz}) if there exists a $L > 0$ such that,
			\begin{equation}
			\|F(z) - F(z^\prime)\|_2 \leq L \|z - z^\prime\|_2, \forall z, z^\prime \in \mathcal{D}.
			\label{def:L_Lipschitz}
			\end{equation}
			If $L = 1$ in \eqref{def:L_Lipschitz}, then $F$ is referred to as \textit{nonexpansive}. Otherwise, if $L \in (0,1)$, then $F$ is referred to as \textit{contractive}. 
			\item \textit{co-coercive} (or \textit{$\frac{1}{L}$-co-coercive}) if there exists a $L > 0$ such that, 
			\begin{equation}
			(F(z) - F(z^\prime)^\top(z-z^\prime ) \geq \frac{1}{L} \|F(z) - F(z)^\prime\|_2^2, \forall z, z^\prime \in \mathcal{D}.
			\label{def:co-coercive}
			\end{equation}
			If $L = 1$ in \eqref{def:co-coercive}, then $F$ is referred to as \textit{firmly nonexpansive}.
		\end{itemize}
	\end{definition}
	
	By Cauchy-Schwarz inequality, every $\frac{1}{L}$-co-coercive operator is $L$-Lipschitz, in particular, every firmly nonexpansive operator is nonexpansive. However, the reverse need not be true, for example $f(z) = -z$ is nonexpansive but not firmly nonexpansive. Fortunately, the Baillon-Haddad theorem (\cite[p. 40]{Peypouquet}, Theorem 3.13) provides the condition for when a $L$-Lipschitz operator is also $\frac{1}{L}$-co-coercive \cite{Baillon}. 
	
	\begin{thm} (Baillon-Haddad theorem)
		\label{thm:Baillon_Haddad}
		Let $f:\dom f \subseteq \mathbb{R}^n \to \mathbb{R}$ be a $C^1$, convex function on $\dom f$ and such that $\nabla f$ is L-Lipschitz continuous for some $L > 0$,  then $\nabla f$ is $\dfrac{1}{L}$-co-coercive.
	\end{thm}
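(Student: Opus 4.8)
The plan is to give the classical variational proof built on the ``descent lemma,'' rather than the conjugate-function route of \cite[Theorem 3.13]{Peypouquet}. Fix $z, z^\prime \in \dom f$ and introduce the auxiliary function $g(y) \coloneqq f(y) - \nabla f(z^\prime)^\top y$. Since $f$ is $C^1$ and convex, so is $g$, with $\nabla g(y) = \nabla f(y) - \nabla f(z^\prime)$; in particular $\nabla g(z^\prime) = \mathbf{0}$, and since a $C^1$ convex function lies above its tangent plane, $z^\prime$ is a global minimizer of $g$. Note also that $\nabla g$ is $L$-Lipschitz because $\nabla f$ is.

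First I would establish the descent lemma: if $h$ is $C^1$ with $L$-Lipschitz gradient, then $h(v) \leq h(u) + \nabla h(u)^\top (v - u) + \tfrac{L}{2}\|v-u\|_2^2$ for all $u,v$ in the domain. This comes from writing $h(v) - h(u) - \nabla h(u)^\top(v-u) = \int_0^1 \bigl(\nabla h(u + t(v-u)) - \nabla h(u)\bigr)^\top (v-u)\, dt$, bounding the integrand by $Lt\|v-u\|_2^2$ via Cauchy--Schwarz and \eqref{def:L_Lipschitz}, and integrating.

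Next, apply the descent lemma to $g$ with $u = z$ and $v = z - \tfrac{1}{L}\nabla g(z)$ (the gradient step), which gives $\min g \leq g\bigl(z - \tfrac1L\nabla g(z)\bigr) \leq g(z) - \tfrac{1}{2L}\|\nabla g(z)\|_2^2$. Since $z^\prime$ achieves $\min g$, substituting the definition of $g$ and rearranging yields
\[
f(z^\prime) - f(z) \leq \nabla f(z^\prime)^\top (z^\prime - z) - \tfrac{1}{2L}\|\nabla f(z) - \nabla f(z^\prime)\|_2^2 .
\]
Running the same argument with the roles of $z$ and $z^\prime$ interchanged (using $\widetilde g(y) = f(y) - \nabla f(z)^\top y$, minimized at $z$) gives the mirror inequality; adding the two cancels the $f(z) - f(z^\prime)$ terms and leaves precisely $(\nabla f(z) - \nabla f(z^\prime))^\top (z - z^\prime) \geq \tfrac1L\|\nabla f(z) - \nabla f(z^\prime)\|_2^2$, which is $\tfrac1L$-co-coercivity of $\nabla f$.

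I expect the only real snag to be the domain issue: the step point $z - \tfrac1L \nabla g(z)$ must lie in $\dom f$ for the descent-lemma estimate to apply, which is automatic when $\dom f = \mathbb{R}^n$ (the case relevant in this paper, e.g.\ for $\lse$) but otherwise requires an extension argument or the Fenchel-conjugate proof of \cite{Peypouquet}. Everything else is routine bookkeeping.
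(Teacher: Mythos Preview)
Your proof is correct and is the standard ``descent lemma'' derivation of Baillon--Haddad; the domain caveat you flag is exactly the right one, and since the only application in the paper is to $\lse$ on $\mathbb{R}^n$ it causes no trouble.

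However, there is nothing to compare against: the paper does not prove this theorem. It is stated in the background section as a known result, with a citation to \cite[p.~40, Theorem~3.13]{Peypouquet} and to the original paper \cite{Baillon}, and is then invoked as a black box in the proof of \autoref{cor:softmax_is_co_coercive}. So your proposal supplies strictly more than the paper does. If you want to match the reference the paper points to, \cite{Peypouquet} uses the Fenchel-conjugate route (via the equivalence of $L$-smoothness of $f$ with $\tfrac{1}{L}$-strong convexity of $f^\star$), which sidesteps the domain issue entirely; your descent-lemma argument is more elementary but, as you note, is cleanest when $\dom f = \mathbb{R}^n$.
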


	Finally, we will introduce the notion of \textit{maximal monotonicity}. Let $H: \mathbb{R}^n \to 2^{\mathbb{R}^n}$ be the set-valued map, where $2^{\mathbb{R}^n}$ denotes the power set of $\mathbb{R}^n$. Let the graph of $H$ be given by $\text{gra}H \coloneqq \{(u,v) \in \mathbb{R}^n \times \mathbb{R}^n| v = Hu\}$. The set-valued map $H$ is said to be \textit{monotone} if $
	(u - u^\prime)^\top(v-v^\prime) \geq 0, v \in H(u), v^\prime \in H(u^\prime)$. 
	\begin{definition} (\cite[p. 297]{Bauschke}) Let $H: \mathbb{R}^n \to 2^{\mathbb{R}^n}$ be monotone. Then $H$ is maximal monotone if there exists no monotone operator $G:\mathbb{R}^n \to 2^{\mathbb{R}^n}$ such that $\gra{G}$ properly contains $\gra{H}$, i.e., for every $(u,v) \in \mathbb{R}^n \times \mathbb{R}^n$, 
	\begin{equation}
		(u,v)\in \gra H \Leftrightarrow (\forall (u^\prime, v^\prime) \in \gra H) \thinspace (u - u^\prime)^\top(v - v^\prime) \geq 0.
	\end{equation}
	\end{definition}
	By Zorn's Lemma, every monotone operator can be extended to a maximal monotone operator \cite[p. 535]{Wets}, \cite[p. 297]{Bauschke}. For the scope of this paper, we are interested when a single-valued map is maximal monotone. The following proposition provides a simple characterization of this result \cite[p. 535]{Wets}. 
	\begin{lem}
		If a continuous mapping $F: \mathbb{R}^n \to \mathbb{R}^n$ is monotone, it is maximal monotone. In particular, every differentiable monotone mapping is maximal monotone.
		\label{lem:maximal_monotone_characterization}
	\end{lem}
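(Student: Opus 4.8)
The plan is to verify the maximality condition straight from the definition. Since $F$ is single-valued, the set-valued map $u \mapsto \{F(u)\}$ is monotone exactly because $F$ is monotone, so $\gra F$ is a monotone set. To conclude it is \emph{maximal} monotone, I must show: whenever a pair $(u,v) \in \mathbb{R}^n \times \mathbb{R}^n$ satisfies $(u - u')^\top(v - F(u')) \geq 0$ for all $u' \in \mathbb{R}^n$, then necessarily $v = F(u)$, i.e. $(u,v) \in \gra F$. This is precisely the assertion that $\gra F$ admits no proper monotone extension.

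So fix such a pair $(u,v)$. The key step is to test the defining inequality along rays through $u$: for an arbitrary direction $w \in \mathbb{R}^n$ and any scalar $t > 0$, set $u' = u - t w$. The hypothesis then reads
\begin{equation}
t\, w^\top\!\big(v - F(u - t w)\big) \geq 0,
\end{equation}
and dividing by $t > 0$ gives $w^\top\!\big(v - F(u - t w)\big) \geq 0$. Now let $t \downarrow 0$; by continuity of $F$ we have $F(u - t w) \to F(u)$, so $w^\top\!\big(v - F(u)\big) \geq 0$. Since $w$ was arbitrary, replacing $w$ by $-w$ yields the reverse inequality, hence $w^\top\!\big(v - F(u)\big) = 0$ for every $w \in \mathbb{R}^n$; choosing $w = v - F(u)$ forces $\|v - F(u)\|_2^2 = 0$, that is, $v = F(u)$. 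Thus $(u,v) \in \gra F$, and $F$ (equivalently, $\gra F$) is maximal monotone.

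The final clause follows at once: a differentiable mapping $F: \mathbb{R}^n \to \mathbb{R}^n$ is in particular continuous, so the first part applies verbatim. The only "idea" in the argument is the choice of the test points $u' = u - t w$ together with the limit $t \downarrow 0$; beyond that there are no analytic obstacles, because $F$ is globally defined on $\mathbb{R}^n$ and we are in finite dimension, so continuity alone suffices to pass to the limit and no boundary or closedness issues arise.
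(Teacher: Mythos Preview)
Your proof is correct and is the standard argument for this fact. Note, however, that the paper does not actually supply its own proof of this lemma: it is presented in the background section as a known result, with a citation to Rockafellar--Wets, \emph{Variational Analysis}, p.~535. So there is no in-paper proof to compare against; your argument simply fills in what the paper delegates to the reference, and the ray-testing approach $u' = u - tw$ followed by $t \downarrow 0$ is exactly the classical route one finds in the cited literature.
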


	\section{Derivation of Properties of Softmax function} 
	
	In this section we derive several properties of the softmax function using tools from convex analysis and monotone operator theory introduced in the previous section. We begin by establishing the connection between the log-sum-exp function and the softmax function. 
	
	It has long been known that the softmax function is the gradient map of a convex potential function \cite{Elfadel}, however, the fact that its potential function is the log-sum-exp function (i.e., \eqref{def:log_sum_exp}) is rarely discussed.\footnote{Although not explicitly stated, this relationship could also be found in \cite[p. 93]{Boyd} and various other sources.} We make this connection clear with the following proposition.
	
	\begin{prop}
		\label{prop:gradient_of_lse_is_softmax}
		The softmax function is the gradient of the log-sum-exp function, that is, $\sigma(z) = \nabla \lse(z)$. 
	\end{prop}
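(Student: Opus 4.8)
The plan is to compute the gradient of $\lse$ directly via the chain rule and to observe that its $i$-th component is exactly $\sigma_i(z)$ as written in \eqref{eqn:softmax_comp}. First I would record that $\lse$ is smooth on all of $\mathbb{R}^n$: it is the composition of the logarithm, which is $C^\infty$ on $(0,\infty)$, with the map $z \mapsto \sum_{j=1}^n \exp(\lambda z_j)$, which is $C^\infty$ and strictly positive everywhere. Hence $\nabla \lse(z)$ exists and is continuous at every $z \in \mathbb{R}^n$, so the componentwise computation below is legitimate and actually produces the gradient.

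Next, fix $z \in \mathbb{R}^n$ and an index $i$, and abbreviate $S(z) \coloneqq \sum_{j=1}^n \exp(\lambda z_j)$. Differentiating
\[
\lse(z) = \lambda^{-1}\log\Big(\textstyle\sum_{j=1}^n \exp(\lambda z_j)\Big)
\]
with respect to $z_i$, the chain rule gives $\partial \lse(z)/\partial z_i = \lambda^{-1} S(z)^{-1}\,\partial S(z)/\partial z_i$. Since only the $j=i$ term of $S$ depends on $z_i$, we have $\partial S(z)/\partial z_i = \lambda \exp(\lambda z_i)$, and the factors $\lambda^{-1}$ and $\lambda$ cancel, leaving $\partial \lse(z)/\partial z_i = \exp(\lambda z_i)/S(z) = \sigma_i(z)$. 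Collecting the components yields $\nabla \lse(z) = (\sigma_1(z), \ldots, \sigma_n(z))^\top = \sigma(z)$, which is the assertion.

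There is essentially no obstacle here; the only point worth stating explicitly is the smoothness of $\lse$, which guarantees that the continuous map $\nabla \lse$ is well defined. One could alternatively bypass the direct differentiation by invoking the Legendre--Fenchel duality noted in Section III: since $\lse$ is the convex conjugate of the negative entropy $\psi$ and $\lse$ is differentiable, $\nabla \lse(z)$ is the unique maximizer in \eqref{eqn:softmax_rep_5}, which by \eqref{eqn:softmax_rep_3} equals $\sigma(z)$. I would present the elementary chain-rule argument as the main proof and perhaps mention the conjugacy viewpoint in a remark, since it is the representation that makes \autoref{lem:convexity_and_monotonicity} immediately applicable to the monotonicity results that follow.
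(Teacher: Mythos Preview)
Your proof is correct and follows essentially the same route as the paper: both compute $\partial \lse(z)/\partial z_i$ directly via the chain rule, obtain $\exp(\lambda z_i)/\sum_j \exp(\lambda z_j) = \sigma_i(z)$, and assemble the components into $\nabla \lse(z) = \sigma(z)$. Your added remarks on the smoothness of $\lse$ and the alternative conjugacy argument are sound but go slightly beyond what the paper records.
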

	
	\begin{proof}
		
		Evaluating the partial derivative of $\lse$ at each component yields $\dfrac{\partial \lse(z)}{\partial z_i} = \dfrac{\exp(\lambda z_i)}{\textstyle{\sum}_{j = 1}^n \exp(\lambda z_j)}$.
			By definition of the gradient, we have,
	\[
\nabla \lse(z) = \begin{bmatrix} \dfrac{\partial \lse(z)}{\partial z_1}  \\ \vdots \\ \dfrac{\partial \lse(z)}{\partial z_n} \end{bmatrix} = \dfrac{1}{\sum\limits_{j = 1}^n \exp(\lambda z_j)} \begin{bmatrix}  \vphantom{\dfrac{\partial \lse(z)}{\partial z_1}}\exp(\lambda z_1)  \\ \vdots \\ \exp(\lambda z_n) \vphantom{\dfrac{\partial \lse(z)}{\partial z_n}}\end{bmatrix} = \sigma(z).
\]
	\end{proof}

	Next, we calculate the Hessian of the log-sum-exp function (and hence the Jacobian of the softmax function). 
	\begin{prop}
		\label{prop:softmax_jacobian}
		The Jacobian of the softmax function and Hessian of the log-sum-exp function is given by: 
		\begin{equation}
		\Jacob[\sigma(z)] = \nabla^2 \lse(z) = \lambda(\diag(\sigma(z)) - \sigma(z)\sigma(z)^\top),
		\label{eqn:Hessian_of_lse}
		\end{equation}
		where \eqref{eqn:Hessian_of_lse} is a symmetric positive semidefinite matrix and satisfies $\Jacob[\sigma(z)]\mathbf{1} = \mathbf{0}$, that is, $\mathbf{1}$ is the eigenvector associated with the zero eigenvalue of $\Jacob[\sigma(z)]$.
	\end{prop}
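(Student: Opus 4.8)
The plan is to compute the Hessian of the log-sum-exp function directly by differentiating the expression for its gradient obtained in Proposition~\ref{prop:gradient_of_lse_is_softmax}, namely $\sigma(z) = \nabla\lse(z)$, so that $\nabla^2\lse(z) = \Jacob[\sigma(z)]$ follows automatically and the two claims become a single calculation. First I would write $\sigma_i(z) = \exp(\lambda z_i)/S(z)$ with $S(z) \coloneqq \sum_{j=1}^n \exp(\lambda z_j)$, note that $\partial S/\partial z_k = \lambda\exp(\lambda z_k)$, and apply the quotient rule to get $\partial \sigma_i/\partial z_k$. Splitting into the diagonal case $k = i$ and the off-diagonal case $k\neq i$ yields $\partial\sigma_i/\partial z_i = \lambda\sigma_i(z)(1-\sigma_i(z))$ and $\partial\sigma_i/\partial z_k = -\lambda\sigma_i(z)\sigma_k(z)$ for $k\neq i$; assembling these entries into a matrix gives exactly $\lambda(\diag(\sigma(z)) - \sigma(z)\sigma(z)^\top)$, which establishes \eqref{eqn:Hessian_of_lse}.

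Next I would verify the three asserted structural properties. Symmetry is immediate since $\diag(\sigma(z))$ is diagonal and $\sigma(z)\sigma(z)^\top$ is a symmetric rank-one matrix; alternatively it follows from the equality of mixed partials for the $C^2$ function $\lse$. For the eigenvector claim, I would compute $\Jacob[\sigma(z)]\mathbf{1} = \lambda(\diag(\sigma(z))\mathbf{1} - \sigma(z)(\sigma(z)^\top\mathbf{1}))$ and use the already-recorded fact that $\sigma(z)^\top\mathbf{1} = \|\sigma(z)\|_1 = 1$ together with $\diag(\sigma(z))\mathbf{1} = \sigma(z)$, so the bracket collapses to $\sigma(z) - \sigma(z) = \mathbf{0}$.

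For positive semidefiniteness I would give the probabilistic/variance argument: for any $v \in \mathbb{R}^n$,
\[
v^\top \Jacob[\sigma(z)] v = \lambda\Big(\textstyle\sum_{i=1}^n \sigma_i(z) v_i^2 - \big(\sum_{i=1}^n \sigma_i(z) v_i\big)^2\Big),
\]
which is $\lambda$ times the variance of the random variable taking value $v_i$ with probability $\sigma_i(z)$, hence nonnegative by Jensen's inequality (or Cauchy--Schwarz applied to the vectors with entries $\sqrt{\sigma_i(z)}$ and $\sqrt{\sigma_i(z)}\,v_i$). Since $\lambda > 0$ this gives $v^\top\Jacob[\sigma(z)]v \geq 0$. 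As an even shorter route, one can simply invoke Lemma~\ref{lem:convex_c2_characterization}: $\lse$ is $C^2$ on all of $\mathbb{R}^n$ and is a standard example of a convex function, so its Hessian is automatically positive semidefinite; I would mention this but still record the explicit variance identity since it will likely be reused when analyzing the Lipschitz and co-coercivity constants later in the section.

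I do not anticipate a genuine obstacle here — the result is a direct computation — so the only thing to be careful about is bookkeeping: keeping the factor of $\lambda$ in the right place when differentiating $\exp(\lambda z_j)$, correctly separating the $k=i$ and $k\neq i$ cases in the quotient rule, and being explicit that the rank-one term $\sigma(z)\sigma(z)^\top$ accounts for \emph{all} off-diagonal entries as well as the $-\sigma_i(z)^2$ correction on the diagonal. The mild subtlety worth flagging in the write-up is that positive semidefiniteness is strict in no direction other than $\mathbf{1}$ (the kernel is exactly $\mathrm{span}\{\mathbf{1}\}$ when all $\sigma_i(z) > 0$), which is consistent with $\sigma$ being non-injective along the $\mathbf{1}$ direction as already noted; I would remark on this in passing but it is not needed for the stated proposition.
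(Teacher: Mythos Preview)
Your proposal is correct and follows essentially the same approach as the paper: compute the diagonal and off-diagonal second partials of $\lse$ directly, assemble them into $\lambda(\diag(\sigma(z)) - \sigma(z)\sigma(z)^\top)$, read off symmetry from the structure, obtain positive semidefiniteness via Cauchy--Schwarz (your variance identity is exactly the quadratic form the paper has in mind), and verify $\Jacob[\sigma(z)]\mathbf{1}=\mathbf{0}$ by direct computation. The only difference is granularity: you spell out the eigenvector computation and the variance interpretation explicitly, whereas the paper just gestures at ``direct computation'' and cites \cite{Boyd,Sandholm}; substantively the arguments are the same.
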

	
	\begin{proof}
		
		The diagonal entries of $\nabla^2 \lse$ are given by,
		\[
		\dfrac{\partial^2 \lse(z)}{\partial z_i^2} = \dfrac{\lambda\left[\exp(\lambda z_i)\sum\limits_{j = 1}^n \exp(\lambda z_j) - \exp(\lambda z_i)^2\right]}{(\sum\limits_{j = 1}^n \exp(\lambda z_j))^2},
		\]
		and the off-diagonal entries of $\nabla^2 \lse$ are given by the mixed partials,
		\[
		\dfrac{\partial^2 \lse(z)}{\partial z_k\partial z_i} = \dfrac{-\lambda \exp(\lambda z_k)\exp(\lambda z_i)}{(\sum\limits_{j = 1}^n \exp(\lambda z_j))^2}.
		\]
		Assembling the partial derivatives, we obtain the Hessian of $\lse$ and the Jacobian of $\sigma$:
		\begin{equation}
		\Jacob[\sigma(z)] = \nabla^2 \lse(z) = \lambda(\diag(\sigma(z)) - \sigma(z)\sigma(z)^\top).
		\end{equation}
		
		The symmetry of $\Jacob[\sigma(z)]$ comes from the symmetric structure of the diagonal and outer product terms. The positive semi-definiteness of $\Jacob[\sigma(z)]$ follows from an application of the Cauchy-Schwarz inequality \cite[p. 74]{Boyd}. It can be shown through direct computation that $\Jacob[\sigma(z)]\mathbf{1} = \mathbf{0}$ or alternatively refer to \cite[p. 213]{Sandholm}.
	\end{proof}
	\begin{remark}
	This result was previous noted in references such as \cite{Elfadel, Elfadel2} and can be found in \cite[p. 195]{Sandholm}\cite[p. 74]{Boyd}. As a trivial consequence of \autoref{prop:softmax_jacobian}, we can write the individual components of $\Jacob[\sigma(z)]$ as, \begin{equation}\Jacob_{ij}[\sigma(z)] = \lambda\sigma_i(z)(\delta_{ij} - \sigma_j(z)),\end{equation} where $\delta_{ij}$ is the Kronecker delta function. This representation is preferred for machine learning related applications and is loosely referred to as the ``derivative of the softmax" \cite{Alpaydin}.  
	\end{remark} 
	\begin{remark}
	Using the Jacobian of the softmax function given in \eqref{eqn:Hessian_of_lse}, we provide the following important observation that connects the field of evolutionary game theory with convex analysis and monotone operator theory. Let $x = \sigma(z)$, then we have,
	\begin{equation}
	\left. \nabla^2 \lse(z) \right|_{x = \sigma(z)} = \lambda(\diag(x) - xx^\top).
	\end{equation}
	We note that this is precisely the matrix term appearing in the replicator dynamics \cite[p. 229]{Sandholm}, \cite{Projection}, that is, 
	\begin{equation}
		\dot x = 	\left. \nabla^2 \lse(z) \right|_{x = \sigma(z)}u = \lambda(\diag(x) - xx^\top)u,
	\end{equation}
	where $x \in \Updelta^{n-1}$ is a mixed strategy and $u \in \mathbb{R}^n$ is a \textit{payoff vector}. We note that the matrix term was referred to as the \textit{replicator operator} in \cite{Hopkins_Learning}. To the best of our knowledge, the implications of this connection has not been discussed in the evolutionary game theory community. 
	\end{remark}
	
	\begin{restatable}{lem}{Logsumexpisconvex}
		\label{lem:logsumexpisconvex}	
		The log-sum-exp function is $C^2$, convex and not strictly convex on $\mathbb{R}^n$.
	\end{restatable}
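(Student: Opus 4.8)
The plan is to handle the three assertions separately, reusing the explicit Hessian already computed in \autoref{prop:softmax_jacobian}. For the $C^2$ claim I would note that $z \mapsto \sum_{j=1}^n \exp(\lambda z_j)$ is a finite sum of compositions of the entire function $\exp$ with affine maps, hence $C^\infty$ and everywhere strictly positive, so $\lse = \lambda^{-1}\log(\cdot)$ composed with it is $C^\infty$ and in particular $C^2$; equivalently, the first and second partials written out in the proofs of \autoref{prop:gradient_of_lse_is_softmax} and \autoref{prop:softmax_jacobian} are visibly continuous.

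For convexity I would appeal to \autoref{lem:convex_c2_characterization}: since $\dom\lse = \mathbb{R}^n$ is convex and $\lse \in C^2$, it suffices to check that $\nabla^2\lse(z)$ is positive semidefinite for every $z$. Using \eqref{eqn:Hessian_of_lse}, for $v \in \mathbb{R}^n$ one has $v^\top\nabla^2\lse(z)v = \lambda\big(\sum_i \sigma_i(z)v_i^2 - (\sum_i \sigma_i(z)v_i)^2\big)$, and since the weights $\sigma_i(z)$ are nonnegative with $\sum_i \sigma_i(z) = 1$, the Cauchy--Schwarz inequality (equivalently, Jensen applied to $t \mapsto t^2$) yields nonnegativity. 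This is exactly the positive semidefiniteness already asserted in \autoref{prop:softmax_jacobian}, so this step is essentially free.

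The one point that needs genuine care is the \emph{failure} of strict convexity, and here I would resist the tempting but fallacious move of deducing it from the Hessian being singular at a point ($x^4$ is the standard warning). Instead I would exploit that $\mathbf{1}$ lies in $\ker\nabla^2\lse(z)$ for \emph{every} $z$ — or, more concretely, the elementary identity $\lse(z + t\mathbf{1}) = \lse(z) + t$, obtained by factoring $e^{\lambda t}$ out of the sum inside the logarithm. Taking $z' = z + \mathbf{1} \neq z$ and any $\theta \in (0,1)$, both sides of \eqref{def:convex} evaluate to $\lse(z) + (1-\theta)$, so the inequality cannot be strict; hence $\lse$ is not strictly convex on $\mathbb{R}^n$. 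I would also remark that this affine behaviour of $\lse$ along $\mathbf{1}$ is the analytic counterpart of the translation invariance $\sigma(z + c\mathbf{1}) = \sigma(z)$ of the softmax noted earlier.
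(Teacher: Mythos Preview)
Your proposal is correct and follows essentially the same route as the paper: convexity is deduced from the positive semidefiniteness of the Hessian established in \autoref{prop:softmax_jacobian}, and the failure of strict convexity is obtained from the identity $\lse(z+c\mathbf{1})=\lse(z)+c$, i.e., affinity along the $\mathbf{1}$ direction. You supply more detail than the paper does---an explicit justification of $C^2$ smoothness and the cautionary remark about not inferring non-strict-convexity from a merely singular Hessian---but the underlying argument is the same.
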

	
	The convexity of the log-sum-exp function is well-known \cite{Boyd} and follows from \autoref{prop:softmax_jacobian}. 
	To show that log-sum-exp is not strictly convex, take $z$ and $z + c\mathbf{1}$, where $z\in \mathbb{R}^n, c \in \mathbb{R}$, then, 
	\begin{equation}
	\label{eqn:lse_is_affine}
	\lse(z + c\mathbf{1}) = \lse(z) + c.
	\end{equation}
	Thus, $\lse$ is affine along the line given by $z + c\mathbf{1}$, which implies that the log-sum-exp function is not strictly convex.  This result is also noted in \cite[p. 48]{Wets}. 

	\begin{prop}
		\label{prop:softmax_is_monotone}
		
		The softmax function is monotone, that is,	
		\begin{equation}
			(\sigma(z) - \sigma(z^\prime))^\top(z - z^\prime) \geq 0, \forall z, z^\prime \in \mathbb{R}^n,
			\label{eqn:softmax_is_monotone}
		\end{equation}	
		and not strictly monotone on $\mathbb{R}^n$.
	\end{prop}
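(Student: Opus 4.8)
The plan is to read off monotonicity of $\sigma$ directly from the structure already established. By \autoref{prop:gradient_of_lse_is_softmax} we have $\sigma = \nabla\lse$, and by \autoref{lem:logsumexpisconvex} the function $\lse$ is $C^2$ (hence $C^1$) and convex on all of $\mathbb{R}^n$. Applying the first-order characterization of convexity for $C^1$ functions, \autoref{lem:convexity_and_monotonicity}, to $f=\lse$ yields
\[
(\nabla\lse(z)-\nabla\lse(z'))^\top(z-z')\geq 0,\qquad\forall z,z'\in\mathbb{R}^n,
\]
which, after substituting $\nabla\lse=\sigma$, is exactly \eqref{eqn:softmax_is_monotone}. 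So monotonicity needs no computation beyond invoking the two earlier results.

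For the claim that $\sigma$ is \emph{not} strictly monotone, I would exhibit an explicit witness rather than merely assert it. Fix any $z\in\mathbb{R}^n$ and any scalar $c\neq 0$, and set $z'\coloneqq z+c\mathbf{1}$. The shift-invariance of the softmax noted in Section~III, $\sigma(z+c\mathbf{1})=\sigma(z)$, gives $\sigma(z)-\sigma(z')=\mathbf{0}$, hence
\[
(\sigma(z)-\sigma(z'))^\top(z-z') = \mathbf{0}^\top(-c\mathbf{1}) = 0,
\]
even though $z\neq z'$. Strict monotonicity would require the left-hand side to be strictly positive whenever $z\neq z'$, so this is a contradiction. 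Equivalently, one may package both halves at once: by \autoref{lem:logsumexpisconvex} the function $\lse$ is convex but not strictly convex, so \autoref{lem:convexity_and_monotonicity} simultaneously delivers monotonicity and the failure of strict monotonicity of $\nabla\lse=\sigma$. As a consistency check one can also appeal to \autoref{prop:softmax_jacobian}: $\Jacob[\sigma(z)]\mathbf{1}=\mathbf{0}$ shows the Hessian of $\lse$ is only positive semidefinite, never positive definite, which is precisely what rules out strict monotonicity.

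I do not anticipate a genuine obstacle here; the statement is a corollary of \autoref{prop:gradient_of_lse_is_softmax}, \autoref{lem:logsumexpisconvex}, and \autoref{lem:convexity_and_monotonicity}. The only points requiring a little care are (i) making sure the cited convexity--monotonicity lemma is applied with the correct regularity hypothesis ($\lse$ is $C^1$, indeed $C^2$, so this is fine), and (ii) providing the concrete pair $z,\,z+c\mathbf{1}$ to certify non-strictness instead of leaving it implicit.
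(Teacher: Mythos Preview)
Your proposal is correct and follows essentially the same route as the paper: monotonicity is deduced from $\sigma=\nabla\lse$ together with convexity of $\lse$ via \autoref{lem:convexity_and_monotonicity}, and the failure of strict monotonicity comes from the shift-invariance $\sigma(z+c\mathbf{1})=\sigma(z)$ (the paper phrases this as non-injectivity of $\sigma$, or equivalently non-strict convexity of $\lse$). Your version is simply more explicit in writing out the witness pair and the vanishing inner product.
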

	
	\begin{proof}
		Monotonicity of $\sigma$ follows directly from the convexity of the log-sum-exp function. Since the log-sum-exp function is not strictly convex on $\mathbb{R}^n$, therefore by \autoref{lem:convexity_and_monotonicity}, $\sigma$ fails to be strictly monotone. Alternatively, since every strictly monotone operator is injective, therefore $\sigma$ is not strictly monotone on $\mathbb{R}^n$.   
	\end{proof}
	
	The monotonicity of $\sigma$ allows us to state a stronger result.
	
	\begin{cor}
		\label{cor:softmax_is_maximal_monotone}
		The softmax function is a maximal monotone operator, that is, there exists no monotone operator such that its graph properly contains the graph of the softmax function.
	\end{cor}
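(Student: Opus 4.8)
The plan is to invoke \autoref{lem:maximal_monotone_characterization} directly. That lemma asserts that any continuous monotone single-valued map on $\mathbb{R}^n$ is automatically maximal monotone, and in fact that every differentiable monotone map is maximal monotone. So the entire argument reduces to checking that the softmax function meets these hypotheses.

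First I would recall that $\sigma = \nabla\lse$ by \autoref{prop:gradient_of_lse_is_softmax}, and that $\lse$ is $C^2$ on all of $\mathbb{R}^n$ by \autoref{lem:logsumexpisconvex}; hence $\sigma$ is $C^1$ (indeed $C^\infty$) and in particular continuous and differentiable on $\mathbb{R}^n$. Second, monotonicity of $\sigma$ on $\mathbb{R}^n$ is exactly \autoref{prop:softmax_is_monotone}. Having verified both the regularity and the monotonicity hypotheses, the conclusion that $\sigma$ is maximal monotone follows immediately from \autoref{lem:maximal_monotone_characterization}: there is no monotone operator $G:\mathbb{R}^n\to 2^{\mathbb{R}^n}$ whose graph properly contains $\gra\sigma$.

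There is essentially no obstacle here — the result is a direct corollary, which is why it is stated as a corollary rather than a proposition. The only thing worth being slightly careful about is that \autoref{lem:maximal_monotone_characterization} is stated for maps defined on all of $\mathbb{R}^n$; since $\dom\sigma = \mathbb{R}^n$ this causes no trouble. If one wanted a self-contained argument avoiding the cited lemma, the alternative would be to note that a maximal monotone operator is precisely the subdifferential of a proper, lower semicontinuous convex function when that operator happens to be a gradient — here $\partial\,\lse = \{\nabla\lse\} = \{\sigma\}$ with $\lse$ convex and continuous, so $\sigma$ is maximal monotone by Rockafellar's theorem on subdifferentials — but given \autoref{lem:maximal_monotone_characterization} is already available in the excerpt, the one-line proof via continuity plus \autoref{prop:softmax_is_monotone} is the cleanest route.
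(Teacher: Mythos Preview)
Your proposal is correct and matches the paper's own proof essentially line for line: the paper simply notes that $\sigma$ is continuous and monotone and then invokes \autoref{lem:maximal_monotone_characterization}. Your additional remarks (the explicit justification of continuity via $\sigma=\nabla\lse$ with $\lse\in C^2$, and the alternative route through Rockafellar's subdifferential theorem) are fine elaborations but not needed for the argument.
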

	
	\begin{proof}
		This directly follows from $\sigma$ being a continuous, monotone map, see \autoref{lem:maximal_monotone_characterization}. 
	\end{proof}

	Next, we show that under appropriate conditions, the softmax function is a contraction in $\|\cdot\|_2$.

	\begin{lem}(\cite[p. 58]{Nesterov}, Theorem 2.1.6)	
		\label{lem:lipschitzcontinuousgradient2} A $C^2$, convex function $f:\mathbb{R}^n \to \mathbb{R}$ has a Lipschitz continuous gradient with Lipschitz constant $L > 0$ if for all $z, v \in \mathbb{R}^n$,
		\begin{equation}
			0 \leq v^\top \nabla^2 f(z)v \leq L\|v\|^2_2.
			\label{eqn:hessian_lip_cont_grad_condition}
		\end{equation}
	\end{lem}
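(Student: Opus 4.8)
The plan is to reduce the Lipschitz estimate for $\nabla f$ to a uniform bound on the operator norm of the Hessian, and then integrate that bound along a line segment.

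First I would record the pointwise consequence of the hypothesis. Since $f$ is $C^2$, the Hessian $\nabla^2 f(w)$ is symmetric for every $w \in \mathbb{R}^n$, and by assumption its quadratic form satisfies $0 \le v^\top \nabla^2 f(w) v \le L\|v\|_2^2$ for all $v$. By the Rayleigh-quotient characterization of eigenvalues (equivalently, the spectral theorem for symmetric matrices), every eigenvalue of $\nabla^2 f(w)$ lies in $[0,L]$, so its spectral norm obeys $\|\nabla^2 f(w)\|_2 \le L$; that is, $\|\nabla^2 f(w)v\|_2 \le L\|v\|_2$ for every $v \in \mathbb{R}^n$. (The lower bound alone already re-establishes convexity of $f$ via \autoref{lem:convex_c2_characterization}, so that hypothesis is in fact redundant, though harmless.)

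Next, fix $z, z' \in \mathbb{R}^n$ and apply the fundamental theorem of calculus to the $C^1$ curve $t \mapsto \nabla f\big(z' + t(z - z')\big)$ on $[0,1]$, whose derivative is $\nabla^2 f(z' + t(z-z'))(z-z')$ by the chain rule, to obtain
\[
\nabla f(z) - \nabla f(z') = \int_0^1 \nabla^2 f\big(z' + t(z - z')\big)\,(z - z')\, dt .
\]
Taking the $2$-norm, moving it inside the integral by the triangle inequality, and applying the operator-norm bound from the first step gives
\[
\|\nabla f(z) - \nabla f(z')\|_2 \;\le\; \int_0^1 \big\|\nabla^2 f\big(z' + t(z-z')\big)(z-z')\big\|_2\, dt \;\le\; \int_0^1 L\,\|z-z'\|_2\, dt \;=\; L\,\|z-z'\|_2 ,
\]
which is precisely \eqref{def:L_Lipschitz} with constant $L$. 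Because $f \in C^2$, all integrands above are continuous, so the integrals are well defined and no measure-theoretic subtleties arise.

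The one step that takes a moment's thought is the passage from the quadratic-form bound to the operator-norm bound $\|\nabla^2 f(w)\|_2 \le L$; it is immediate once the spectral theorem is invoked, but if one prefers to avoid spectral theory, the same conclusion follows from the generalized Cauchy--Schwarz inequality $u^\top A v \le \sqrt{u^\top A u}\,\sqrt{v^\top A v}$ for symmetric positive semidefinite $A$: apply it inside the integral with $A = \nabla^2 f(z' + t(z-z'))$, $v = z-z'$, and $u = \nabla f(z) - \nabla f(z')$, then divide through by $\|\nabla f(z) - \nabla f(z')\|_2$ (the case $\nabla f(z) = \nabla f(z')$ being trivial). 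Everything else is routine.
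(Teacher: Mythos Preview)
Your proof is correct and is the standard textbook argument for this fact. Note, however, that the paper does not actually supply its own proof of this lemma: it is stated as a citation to Nesterov (Theorem~2.1.6) and used as a black box in the proof of \autoref{prop:softmax_is_lipschitz}. Your spectral-norm-plus-line-integral argument is precisely the approach one finds in Nesterov's text, so there is nothing to compare against here beyond saying that you have filled in what the paper chose to outsource. The alternative route you sketch at the end via the generalized Cauchy--Schwarz inequality also goes through (after taking the inner product of both sides of the integral identity with $u=\nabla f(z)-\nabla f(z')$ and bounding $u^\top A_t v \le L\|u\|_2\|v\|_2$), though it is a bit more roundabout than the direct operator-norm bound.
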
 
	
	\begin{prop}
		\label{prop:softmax_is_lipschitz}
		The softmax function is $L$-Lipschitz with respect to $\|\cdot\|_2$ with $L = \lambda$, that is, for all $z, z^\prime \in \mathbb{R}^n$,
		\begin{equation}
			\|\sigma(z) - \sigma(z^\prime)\|_2 \leq \lambda\|z - z^\prime\|_2,
			\label{eqn:softmax_is_lipschitz}
		\end{equation}
		where $\lambda$ is the inverse temperature constant.
	\end{prop}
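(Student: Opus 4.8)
The plan is to use the potential-function viewpoint together with Nesterov's characterization of Lipschitz gradients (\autoref{lem:lipschitzcontinuousgradient2}). Since $\sigma = \nabla\lse$ by \autoref{prop:gradient_of_lse_is_softmax} and $\lse$ is $C^2$ and convex by \autoref{lem:logsumexpisconvex}, it suffices to show that the Hessian $\nabla^2\lse(z) = \lambda(\diag(\sigma(z)) - \sigma(z)\sigma(z)^\top)$ from \autoref{prop:softmax_jacobian} satisfies $0 \le v^\top \nabla^2\lse(z)v \le \lambda\|v\|_2^2$ for every $z,v \in \mathbb{R}^n$. The left inequality is exactly the positive semidefiniteness already established in \autoref{prop:softmax_jacobian}, so the whole argument reduces to the upper bound $\lambda$ on the largest eigenvalue of the Hessian.

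For the upper bound I would expand the quadratic form: writing $p_i \coloneqq \sigma_i(z)$, one has $v^\top \nabla^2\lse(z)v = \lambda\big(\sum_{i=1}^n p_i v_i^2 - (\sum_{i=1}^n p_i v_i)^2\big)$. Since $\sigma(z) \in \interior(\Updelta^{n-1})$, the vector $p=(p_1,\dots,p_n)$ is a probability distribution, so this expression is $\lambda$ times the variance of the ``random variable'' taking value $v_i$ with probability $p_i$; in particular it is nonnegative (which re-derives the PSD claim) and bounded above by $\lambda\sum_{i=1}^n p_i v_i^2$. Because $0 \le p_i \le 1$ for each $i$, we get $\sum_{i=1}^n p_i v_i^2 \le \sum_{i=1}^n v_i^2 = \|v\|_2^2$, hence $v^\top\nabla^2\lse(z)v \le \lambda\|v\|_2^2$. (Equivalently, the dropped term can be controlled via Cauchy--Schwarz, $(\sum_i p_i v_i)^2 \le (\sum_i p_i)(\sum_i p_i v_i^2) = \sum_i p_i v_i^2$.) Invoking \autoref{lem:lipschitzcontinuousgradient2} then gives that $\sigma = \nabla\lse$ is $\lambda$-Lipschitz in $\|\cdot\|_2$, which is precisely \eqref{eqn:softmax_is_lipschitz}.

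There is no real obstacle here — the only substantive point is recognizing the quadratic form of the Hessian as a probability-weighted variance and bounding it by $\lambda\|v\|_2^2$, a one-line estimate. An alternative route avoiding \autoref{lem:lipschitzcontinuousgradient2} is to write $\sigma(z)-\sigma(z') = \int_0^1 \nabla^2\lse(z' + t(z-z'))\,(z-z')\,dt$ and bound the integrand's operator norm by $\lambda$ using the same variance estimate; this is essentially the proof of Nesterov's lemma specialized to $\lse$. I would present the first route since the required lemma is already in hand. It is worth noting (though not needed for the statement) that $L=\lambda$ is a valid but generally loose constant — for instance, for $n=2$ the sharp constant is $\lambda/2$ via \eqref{eqn:softmax_logistic} — but the clean global bound $\lambda$ is what is used later.
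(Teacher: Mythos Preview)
Your proposal is correct and follows essentially the same route as the paper's own proof: both expand the quadratic form of the Hessian as $\lambda(\sum_i \sigma_i(z)v_i^2 - (\sum_i \sigma_i(z)v_i)^2)$, drop the nonnegative squared term, bound $\sum_i \sigma_i(z)v_i^2 \le \|v\|_2^2$ via $\sigma_i(z)\le 1$, and then invoke \autoref{lem:lipschitzcontinuousgradient2}. Your variance interpretation and the integral-remainder alternative are pleasant extras, but the core argument is identical to the paper's.
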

	\begin{proof}
		Given the Hessian of $\lse$ in \autoref{prop:softmax_jacobian}, we have for all $z, v \in \mathbb{R}^n,$
		\begin{align*}
			v^\top \nabla^2\lse(z)v = & \lambda(\textstyle\sum\limits_{i = 1}^n v_i^2\sigma_i(z) - (\sum\limits_{i= 1}^n v_i\sigma_i(z))^2).
			\numberthis \label{eqn:hess_of_lse_expand}
			\shortintertext{Since the second term on the right hand side of \eqref{eqn:hess_of_lse_expand} is nonnegative, therefore,}
			v^\top \nabla^2\lse(z)v \leq & \lambda\textstyle\sum\limits_{i = 1}^n v_i^2\sigma_i(z) \leq \lambda\sup\{\sigma_i(z)\}\sum\limits_{i = 1}^n v_i^2\\
			\implies & v^\top \nabla^2\lse(z)v \leq \lambda \|v\|^2_2. \numberthis
			\label{eqn:hess_is_pos_def_1}
		\end{align*}
		where $\sup\{\sigma_i(z)\} = 1, \forall i \in \{1, \ldots, n\}, \forall z \in \mathbb{R}^n$. 
		By \autoref{lem:logsumexpisconvex}, $\nabla^2 \lse(z)$ is positive semidefinite. Hence using \autoref{lem:convex_c2_characterization} and \eqref{eqn:hess_is_pos_def_1}, we have,
		\begin{equation}
			0 \leq v^\top\nabla^2 \lse(z)v \leq \lambda\|v\|^2_2.
		\end{equation}
		By \autoref{lem:lipschitzcontinuousgradient2}, $\sigma$ is Lipschitz with $L = \lambda$.
	\end{proof}

	We note that \autoref{prop:softmax_is_lipschitz} can also be established by using Theorem 4.2.1. in \cite[p. 240]{Hiriart-Urruty}, which resorts to using duality between the negative entropy and the log-sum-exp function. 
	
	As a minor consequence of \autoref{prop:softmax_is_lipschitz}, by the Cauchy-Schwarz inequality, we have,
	\begin{equation}(\sigma(z) - \sigma(z^\prime))^\top(z-z^\prime) \leq \lambda\|z - z^\prime\|_2^2. \end{equation}

	\begin{cor}
		
		\label{cor:softmax_is_co_coercive}
		The softmax function is $\frac{1}{L}$-co-coercive with respect to $\|\cdot\|_2$ with $L = \lambda$, that is, for all $z, z^\prime \in \mathbb{R}^n$,
		\begin{equation}
			(\sigma(z) - \sigma(z^\prime))^\top(z-z^\prime) \geq \dfrac{1}{\lambda}\|\sigma(z) - \sigma(z^\prime)\|_2^2,
			\label{eqn:softmax_is_co_coercive}
		\end{equation}
		where $\lambda$ is the inverse temperature constant.
	\end{cor}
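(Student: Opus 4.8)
The plan is to obtain co-coercivity essentially for free by combining three facts already established: (i) the softmax is the gradient of the log-sum-exp function (\autoref{prop:gradient_of_lse_is_softmax}), so that $\sigma = \nabla \lse$; (ii) the log-sum-exp function is $C^1$ (indeed $C^2$) and convex on all of $\mathbb{R}^n$ (\autoref{lem:logsumexpisconvex}); and (iii) its gradient $\sigma$ is $\lambda$-Lipschitz with respect to $\|\cdot\|_2$ (\autoref{prop:softmax_is_lipschitz}). With these in hand, the Baillon--Haddad theorem (\autoref{thm:Baillon_Haddad}) applies verbatim with $f = \lse$ and $L = \lambda$, and immediately yields that $\nabla \lse = \sigma$ is $\tfrac{1}{\lambda}$-co-coercive, which is exactly \eqref{eqn:softmax_is_co_coercive}.

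Concretely, I would first restate that $\sigma = \nabla \lse$, so that the inequality to be proven is precisely the co-coercivity of the gradient of $\lse$. Next I would verify that $\lse$ satisfies the hypotheses of \autoref{thm:Baillon_Haddad}: it is real-valued and $C^1$ on $\mathbb{R}^n$, convex by \autoref{lem:logsumexpisconvex}, and has a $\lambda$-Lipschitz gradient by \autoref{prop:softmax_is_lipschitz}. Invoking the theorem then gives
\[
(\nabla \lse(z) - \nabla \lse(z^\prime))^\top (z - z^\prime) \geq \tfrac{1}{\lambda}\|\nabla \lse(z) - \nabla \lse(z^\prime)\|_2^2, \quad \forall z, z^\prime \in \mathbb{R}^n,
\]
and substituting $\nabla \lse = \sigma$ completes the proof.

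There is essentially no obstacle here: all the real work was done in pinning down the Lipschitz constant $L = \lambda$ in \autoref{prop:softmax_is_lipschitz}, and Baillon--Haddad is precisely the device that upgrades a Lipschitz bound on a convex gradient to the matching co-coercivity bound with the same constant. If one wished to avoid citing \autoref{thm:Baillon_Haddad}, an alternative self-contained route would be to fix $z^\prime$, consider $h(z) \coloneqq \lse(z) - \sigma(z^\prime)^\top z$ (convex, with $\lambda$-Lipschitz gradient, and globally minimized where $\nabla h = 0$), and combine the descent lemma for $h$ with its minimization property to derive the estimate; but since \autoref{thm:Baillon_Haddad} is already available in the paper, the short route is preferable.
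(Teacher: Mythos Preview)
Your proposal is correct and matches the paper's own proof essentially verbatim: the paper simply states that the result follows directly from the Baillon--Haddad theorem (\autoref{thm:Baillon_Haddad}), relying implicitly on \autoref{prop:gradient_of_lse_is_softmax}, \autoref{lem:logsumexpisconvex}, and \autoref{prop:softmax_is_lipschitz} exactly as you have outlined. Your write-up is in fact more explicit than the paper's one-line proof, but the approach is identical.
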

	\begin{proof}
		Follows directly from Baillon - Haddad Theorem, see \autoref{thm:Baillon_Haddad}. 
	\end{proof}

	\autoref{prop:softmax_is_lipschitz} and \autoref{cor:softmax_is_co_coercive} show that the inverse temperature constant $\lambda$ is crucial in determining the Lipschitz and co-coercive properties of the softmax function. We summarize these properties with respect to $\|\cdot\|_2$ in the following corollary. 
	
	\begin{cor}
		The softmax function is $\lambda$-Lipschitz and $\frac{1}{\lambda}$-co-coercive for any $\lambda > 0$, in particular,
		\begin{itemize}[leftmargin = *]
			\item Nonexpansive and firmly nonexpansive for $\lambda = 1$,
			\item Contractive for $\lambda \in (0,1)$,
		\end{itemize}
		where $\lambda$ is the inverse temperature constant.
		\label{eqn:softmax_lipschitz_property}
	\end{cor}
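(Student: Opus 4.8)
The plan is to obtain this result by simply specializing the two quantitative estimates already established, namely \autoref{prop:softmax_is_lipschitz} and \autoref{cor:softmax_is_co_coercive}, and matching their constants against the relevant definitions. First I would invoke \autoref{prop:softmax_is_lipschitz} to record that $\sigma$ is $\lambda$-Lipschitz with respect to $\|\cdot\|_2$ for every $\lambda > 0$, i.e., $\|\sigma(z) - \sigma(z^\prime)\|_2 \leq \lambda\|z - z^\prime\|_2$, and \autoref{cor:softmax_is_co_coercive} to record that $\sigma$ is $\frac{1}{\lambda}$-co-coercive, i.e., $(\sigma(z) - \sigma(z^\prime))^\top(z - z^\prime) \geq \frac{1}{\lambda}\|\sigma(z) - \sigma(z^\prime)\|_2^2$. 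Taken together these two facts already supply the first sentence of the statement verbatim.

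For the itemized consequences I would set $L = \lambda$ in \eqref{def:L_Lipschitz} and in \eqref{def:co-coercive}. When $\lambda = 1$, the Lipschitz inequality becomes $\|\sigma(z) - \sigma(z^\prime)\|_2 \leq \|z - z^\prime\|_2$, which is exactly the defining inequality of a nonexpansive operator, and the co-coercivity inequality becomes $(\sigma(z) - \sigma(z^\prime))^\top(z - z^\prime) \geq \|\sigma(z) - \sigma(z^\prime)\|_2^2$, which is exactly firm nonexpansiveness. When $\lambda \in (0,1)$, the Lipschitz constant $L = \lambda$ lies in $(0,1)$, so by definition $\sigma$ is contractive in $\|\cdot\|_2$.

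There is essentially no obstacle here: the corollary is a bookkeeping consolidation of \autoref{prop:softmax_is_lipschitz} and \autoref{cor:softmax_is_co_coercive} with the nomenclature fixed in the definitions of Lipschitz and co-coercive operators. The only point worth a moment's care is to confirm that the thresholds in those definitions ($L = 1$ for (firm) nonexpansiveness, $L \in (0,1)$ for contractiveness) are stated in terms of the Lipschitz constant $L$ itself and not its reciprocal, so that the substitution $L = \lambda$ is the correct one; this is immediate from the way \eqref{def:L_Lipschitz} and \eqref{def:co-coercive} are phrased. One could additionally remark, for contrast, that when $\lambda > 1$ the softmax is in general neither nonexpansive nor a contraction in $\|\cdot\|_2$, consistent with the $\lambda \to \infty$ behavior described after \eqref{def:softmax}, but this is not needed for the statement.
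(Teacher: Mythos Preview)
Your proposal is correct and matches the paper's own treatment: the paper states this corollary without proof, introducing it merely as a summary of \autoref{prop:softmax_is_lipschitz} and \autoref{cor:softmax_is_co_coercive}, and your argument simply makes explicit the substitution $L=\lambda$ into the definitions \eqref{def:L_Lipschitz} and \eqref{def:co-coercive} that the paper leaves implicit.
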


	Finally, we state an additional consequence of $\sigma$ being a Lipschitz, monotone operator with a symmetric Jacobian matrix over all of $\mathbb{R}^n$. 
	\begin{cor}
		The softmax function is monotone plus, pseudo monotone plus and pseudo monotone on $\mathbb{R}^n$. 
	\end{cor}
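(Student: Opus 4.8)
The plan is to establish the strongest of the three properties, namely that $\sigma$ is \emph{monotone plus}, and then to read off \emph{pseudo monotone plus} and \emph{pseudo monotone} from the implication hierarchy recorded just after the definition of a monotone operator (strictly monotone $\Rightarrow$ monotone plus $\Rightarrow$ monotone, pseudo monotone plus, pseudo monotone). Hence the only substantive point is the defining implication of monotone plus: that $(\sigma(z)-\sigma(z'))^\top(z-z')=0$ forces $\sigma(z)=\sigma(z')$.

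First I would invoke \autoref{prop:softmax_is_monotone} to obtain monotonicity of $\sigma$ on $\mathbb{R}^n$, which supplies the first half of the definition of monotone plus. For the second half I would use the co-coercivity established in \autoref{cor:softmax_is_co_coercive}: if $z,z'\in\mathbb{R}^n$ satisfy $(\sigma(z)-\sigma(z'))^\top(z-z')=0$, then
\[
0 = (\sigma(z)-\sigma(z'))^\top(z-z') \geq \tfrac{1}{\lambda}\|\sigma(z)-\sigma(z')\|_2^2 \geq 0,
\]
so $\|\sigma(z)-\sigma(z')\|_2 = 0$ and therefore $\sigma(z)=\sigma(z')$. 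This shows $\sigma$ is monotone plus on $\mathbb{R}^n$.

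It then remains to pass from monotone plus to the other two notions. For completeness I would spell this out rather than merely cite the hierarchy: monotonicity gives $\sigma(z)^\top(z-z')\geq\sigma(z')^\top(z-z')$, so $\sigma(z')^\top(z-z')\geq 0$ implies $\sigma(z)^\top(z-z')\geq 0$, which is pseudo monotonicity; and if additionally $\sigma(z)^\top(z-z')=0$, then the sandwiched quantity $\sigma(z')^\top(z-z')$ is forced to equal $0$ as well, whence $(\sigma(z)-\sigma(z'))^\top(z-z')=0$ and the monotone-plus property yields $\sigma(z)=\sigma(z')$, which is pseudo monotone plus.

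I do not expect a genuine obstacle: the statement is essentially a corollary of \autoref{cor:softmax_is_co_coercive} (itself a consequence of Baillon--Haddad) combined with elementary bookkeeping among the monotonicity notions. The one place warranting a little care is the direction in which the co-coercivity inequality is used — co-coercivity is exactly what upgrades ``the monotonicity gap vanishes'' to ``the operator values coincide,'' and this is what separates monotone plus from plain monotonicity.
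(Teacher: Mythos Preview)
Your argument is correct. The paper's own proof is a bare one-line citation --- ``This follows from the chain of implications in \cite[p.~164]{Facchinei}'' --- invoked after noting that $\sigma$ is Lipschitz, monotone, and has a symmetric Jacobian on $\mathbb{R}^n$; it does not spell out which implication is doing the work. Your route is more explicit: you use \autoref{cor:softmax_is_co_coercive} directly to show that a vanishing monotonicity gap forces $\sigma(z)=\sigma(z')$, which is precisely monotone plus, and then you unpack the hierarchy to get pseudo monotone plus and pseudo monotone. Substantively both proofs rest on the same ingredients (convex potential with Lipschitz gradient $\Rightarrow$ co-coercive $\Rightarrow$ monotone plus), so your argument can be viewed as the self-contained version of what the Facchinei citation encapsulates; what you gain is that the reader does not need to chase an external reference, at the cost of a few extra lines.
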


	\begin{proof}
		This follows from the chain of implications in \cite[p. 164]{Facchinei}.
	\end{proof}

	\section{Application in Game-Theoretic Reinforcement Learning} 
		\begin{figure}[h]
		\begin{center}
			\includegraphics[scale=0.8]{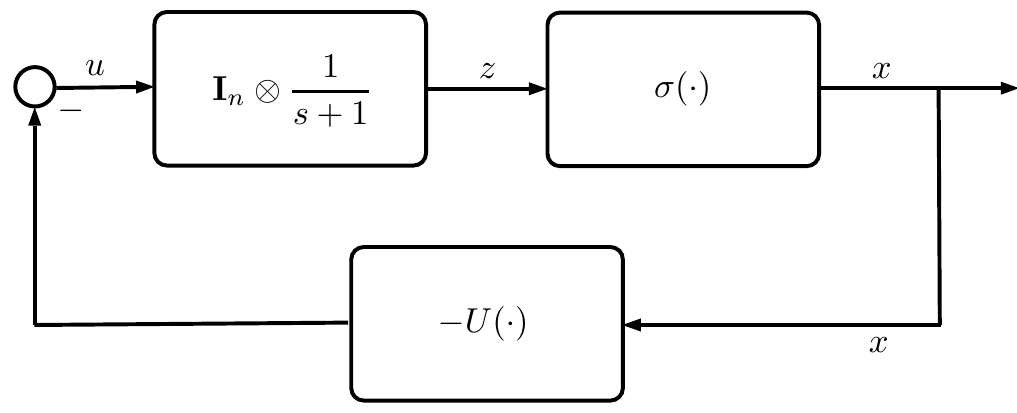}
		\end{center}
		\caption{Feedback representation of the exponentially-discounted reinforcement learning scheme (EXP-D-RL).} 
		\label{fig:Z_Exponential_Learning_2}
	\end{figure}
	
	In this section we demonstrate an application of these new properties of the softmax function in the context of stateless continuous-time reinforcement learning in finite games. For clarity and ease of notation, we perform our analysis in a single-player setup. For extension to $N$-player games, higher-order extension, and addition simulations, refer to our related paper \cite{Gao}. For other related work in this direction, see \cite{Coucheney_Learning, Merti_Learning, Laraki_Higher_Game}. 
	
	Consider a game $\mathcal{G}$ with a single player. We note that type of game is also known as ``play against nature" and is identifiable with single-population matching in population games \cite{Sandholm}. The player is equipped with an action set $\mathcal{A} = \{1, \ldots, n\}$ and continuous payoff function $\mathcal{U}: \mathcal{A} \to \mathbb{R}$. A mixed strategy profile is given by $x = \begin{bmatrix} x_1, \ldots, x_n \end{bmatrix}^\top \in \Updelta^{n-1}$. The player's expected payoff of using $x$ is given by,
	\[
		\mathcal{U}(x) = \sum\limits_{i \in \mathcal{A}} x_i U_i(x) = x^\top U(x),
	\]
	where $u = U(x) = \begin{bmatrix} U_1 & \ldots U_n \end{bmatrix}^\top \in \mathbb{R}^n$ is referred to the \textit{payoff vector} at $x$. 

	Starting at $t = 0$, we assume that the player repeatedly interacts with the game and aggregates his raw payoff $u = U(x) \in \mathbb{R}^n$ into score variables (or \textit{Q-values}) $z \in \mathbb{R}^n$ via the learning rule,
	\begin{equation}
	z_i(t) = e^{-t} z_i(0) + \small\int\limits_0^t e^{-(t-\tau)} u_i(\tau) \mathrm{d}\tau, \forall i \in \mathcal{A},
	\label{def:exponential_discount_learning}
	\end{equation}
	where $u_i = U_i(x) \in \mathbb{R}$ is the payoff to the $i^{\text{th}}$ strategy and $z_i \in \mathbb{R}$ is the score variable associated with the $i^{\text{th}}$ strategy. This form of aggregation as given by \eqref{def:exponential_discount_learning} is known as \textit{exponentially-discounted learning rule}, under which the player allocates exponentially more weight to recent observations of the payoff \cite{Coucheney_Learning, Merti_Learning}. 
	
	Taking the time derivative of \eqref{def:exponential_discount_learning} yields the \textit{score dynamics},
	\begin{equation}
	\dot z_i = u_i - z_i,  \forall i \in \mathcal{A},
	\label{eqn:first_order_score_dyn_comp}
	\end{equation}
	We refer to \eqref{eqn:first_order_score_dyn_comp} as the \textit{exponentially-discounted score dynamics}, a set of differential equations whose solutions capture the evolution of the player's scores over time. This form of score dynamics was investigated in \cite{Kianercy, Coucheney_Learning, Sato, Cominetti}. Since $U_i(x)$ is continuous over a compact domain, therefore there exists some constant $M > 0$ such that $|U_i(x)| \leq M, \forall x \in \Updelta^{n-1}$. Then it can be shown using standard arguments that $|z_i(t)| \leq \max\{|z_i^p(t)|, M\}, \forall t \geq 0$ and $\Omega = \{z\in \mathbb{R}^n| \|z\| \leq \sqrt{n}M\}$ is a compact, positively invariant set (solution remains in $\Omega$ for all time).
	
	We can express \eqref{eqn:first_order_score_dyn_comp} using stacked-vector notation as,
	\begin{equation}
	\dot z = u - z,
	\label{eqn:first_order_score_dyn}
	\end{equation} where $z = \begin{bmatrix} z_1, \ldots, z_n\end{bmatrix}^\top$.  Suppose that the score variable $z$ is mapped to the strategy $x$ from the softmax selection rule, i.e., $x = \sigma(z)$, then the payoff vector can be written as $u = U(x) =  U(\sigma(z))$. Expressing the composition between the softmax selection rule with the payoff vector as $U \circ \sigma(z) \coloneqq U(\sigma(z))$, then we can also write \eqref{eqn:first_order_score_dyn} as,
	\begin{equation}
	\dot z = (U\circ\sigma)(z)  - z.
	\label{Exponential_Discount_Dynamic_2_Overall}
	\end{equation}
	
	The overall \textit{exponentially-discounted reinforcement learning scheme} (EXP-D-RL) can be represented as a closed-loop feedback system in \autoref{fig:Z_Exponential_Learning_2}, where $\mathbf{I}_{n}$ is the $n \times n$ identity matrix, $\otimes$ is the Kronecker product, and $\dfrac{1}{s+1}$ is the \textit{transfer function} of \eqref{eqn:first_order_score_dyn_comp} from $u_i$ to $z_i, s \in \mathbb{C}$. The closed-loop system is equivalently represented by,
	\begin{equation} 
	\begin{cases}
	\dot z = u - z,\\
	{u} = U(x),\\
	{x} = \sigma(z).
	\end{cases}
	\label{eqn:first_order_exponentially_discounted_dynamics_system}
	\end{equation}

	From \eqref{Exponential_Discount_Dynamic_2_Overall}, we see that the equilibria of the overall closed-loop system  \eqref{eqn:first_order_exponentially_discounted_dynamics_system} are the fixed points of the map $z \mapsto (U\circ\sigma)(z)$. This fixed-point condition can be restated as,
	\begin{equation} 
	\begin{cases}
	\dot z = 0 \implies \overline{u}^\star = \overline{z}^\star,\\
	{\overline{u}^\star} = U(\overline{x}^\star),\\
	{\overline{x}^\star} = \sigma(\overline{z}^\star).
	\end{cases}
	\label{eqn:fixed_point_condition_score_dyn}
	\end{equation}
%
%

	The existence of the fixed point is guaranteed by the Brouwer's Fixed Point Theorem provided that $U \circ \sigma$ is a continuous function with bounded range \cite{Cominetti}. Since $\overline{z}^\star = \overline{u}^\star$, therefore the fixed point $\overline{z}^\star$ is mapped through $\sigma$ to a logit equilibrium \cite{qre, Coucheney_Learning}. 
	
	\begin{prop}
		\label{prop:qre} $\overline{x}^\star = \sigma(\overline{z}^\star) = \sigma(\overline{u}^\star)$ is the logit equilibrium of the game $\mathcal{G}$. 
	\end{prop}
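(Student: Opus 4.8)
The plan is to show that the fixed-point system \eqref{eqn:fixed_point_condition_score_dyn} is, after eliminating the auxiliary variables $\overline{z}^\star$ and $\overline{u}^\star$, exactly the defining equation of a logit equilibrium. First I would recall the standard definition (following \cite{qre, Coucheney_Learning}): a mixed strategy $\overline{x}^\star \in \interior(\Updelta^{n-1})$ is a \emph{logit equilibrium} of the game $\mathcal{G}$ at inverse temperature $\lambda$ if it satisfies the consistency condition $\overline{x}^\star = \sigma(U(\overline{x}^\star))$, i.e., componentwise, $\overline{x}^\star_i = \exp(\lambda U_i(\overline{x}^\star)) / \sum_{j} \exp(\lambda U_j(\overline{x}^\star))$ for all $i \in \mathcal{A}$.

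Next I would simply chain the three relations in \eqref{eqn:fixed_point_condition_score_dyn}: setting $\dot z = 0$ gives $\overline{z}^\star = \overline{u}^\star$; substituting this together with $\overline{u}^\star = U(\overline{x}^\star)$ and $\overline{x}^\star = \sigma(\overline{z}^\star)$ yields $\overline{x}^\star = \sigma(\overline{z}^\star) = \sigma(\overline{u}^\star) = \sigma(U(\overline{x}^\star))$, which is precisely the logit-equilibrium consistency condition stated above. Because $\sigma$ maps $\mathbb{R}^n$ into $\interior(\Updelta^{n-1})$ by \eqref{def:softmax}, the point $\overline{x}^\star$ automatically lies in the relative interior of the simplex, so there is no degeneracy or boundary case to rule out, and existence of such a fixed point was already secured above via Brouwer's theorem.

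Finally, for additional interpretation I would invoke the entropy-regularized representation \eqref{eqn:softmax_rep_3}: since $\sigma(\overline{u}^\star) = \argmax_{x \in \Updelta^{n-1}} [\, x^\top U(\overline{x}^\star) - \lambda^{-1} \sum_{j} x_j \log(x_j) \,]$, the equilibrium strategy $\overline{x}^\star$ is the unique maximizer of the entropy-regularized expected payoff evaluated at the payoff vector it itself induces — that is, $\overline{x}^\star$ is a perturbed (smoothed) best-response fixed point, which is the game-theoretic content of a logit equilibrium.

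The only real obstacle here is bookkeeping rather than mathematics: one must make sure the definition of logit equilibrium adopted matches the one in the cited literature and that the inverse-temperature parameter $\lambda$ enters consistently through $\sigma$; once the fixed-point system \eqref{eqn:fixed_point_condition_score_dyn} is available, the proof is a direct substitution.
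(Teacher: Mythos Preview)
Your proposal is correct and matches the paper's approach: the paper does not supply a formal proof for this proposition at all, but simply observes in the preceding sentence that ``since $\overline{z}^\star = \overline{u}^\star$, therefore the fixed point $\overline{z}^\star$ is mapped through $\sigma$ to a logit equilibrium'' and defers to \cite{qre, Coucheney_Learning} for the definition. You have written out explicitly the substitution chain and the entropy-regularized interpretation that the paper leaves implicit, which is exactly what is needed.
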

	
	Hence, the convergence of the solution of the score dynamics $z(t)$ towards the fixed point of $U \circ \sigma$ implies convergence of the induced strategy $x(t) = \sigma(t)$ towards a logit equilibrium point $\overline{x}^*$ of the game. In the following, we provide different assumptions on the payoff function $U$ or the composition between the payoff function and the softmax operator $U \circ \sigma$ under which the induced strategy converges.   For background on dynamical systems and Lyapunov theory, see \cite{Khalil}.  This analysis was inspired by \cite{Cominetti}.

	First, we exploit the co-coercive property of the softmax function to provide the convergence conditions of the exponentially-discounted score dynamics \eqref{eqn:first_order_score_dyn} in a general class of games. Consider the exponentially-discounted reinforcement learning scheme as depicted in \autoref{fig:Z_Exponential_Learning_2}. We proceed by imposing the following assumption on the payoff of the game.

	\begin{assumption}
		\label{assump:payoff_monotonicity}
		The payoff $U$ is anti-monotone, that is, for all $x, x^\prime \in \Updelta^{n-1}$,
		\begin{equation}
		\label{eqn:U_Anti_Monotone_Component}
		(x-{x}^\prime)^\top(U(x) - U({x}^\prime)) \leq 0.
		\end{equation}
	\end{assumption} 
	
	\begin{thm}
		\label{thm:stable_game_convergence}
		 Let $\mathcal{G}$ be a game with player's learning scheme as given by EXP-D-RL, \eqref{eqn:first_order_exponentially_discounted_dynamics_system} (\autoref{fig:Z_Exponential_Learning_2}). Assume there are a finite number of isolated fixed-points $\overline{z}^\star$ of $U \circ \sigma$, then under \autoref{assump:payoff_monotonicity}, the player's score $z(t)$ converges to a rest point $\overline{z}^\star$. Moreover, $x(t) = \sigma(z(t))$ converges to a logit equilibrium $\overline{x}^\star = \sigma(\overline{z}^\star)$ of $\mathcal{G}$.
	\end{thm}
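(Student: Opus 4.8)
The plan is to build a Bregman‑type Lyapunov function for the score dynamics $\dot z = (U\circ\sigma)(z)-z$ out of the log–sum–exp potential, and then apply LaSalle's invariance principle on the compact positively invariant set $\Omega$ constructed above. Fix a rest point $\overline{z}^\star$ of $U\circ\sigma$ (one exists by Brouwer, as already noted) and put $\overline{x}^\star=\sigma(\overline{z}^\star)$, so that the fixed‑point identity \eqref{eqn:fixed_point_condition_score_dyn} reads $U(\overline{x}^\star)=\overline{u}^\star=\overline{z}^\star$. I would take
\[
V(z)\;\coloneqq\;\lse(z)-(\overline{x}^\star)^\top z+\psi(\overline{x}^\star),
\]
i.e.\ the Fenchel–Young gap of the conjugate pair $(\lse,\psi)$. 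By the Fenchel–Young inequality \eqref{eqn:lse_ne_fenchel_young}, $V\ge 0$ everywhere, with $V(z)=0$ exactly when $\overline{x}^\star=\nabla\lse(z)=\sigma(z)$, that is, on the line $L\coloneqq\overline{z}^\star+\mathbb{R}\mathbf{1}$; and $V$ is $C^\infty$ since $\lse$ is.

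The first step is the dissipation estimate. Since $\nabla V(z)=\sigma(z)-\overline{x}^\star$ by \autoref{prop:gradient_of_lse_is_softmax}, differentiating $V$ along a trajectory and writing $x=\sigma(z)$ gives, after adding and subtracting $U(\overline{x}^\star)$ and using $U(\overline{x}^\star)=\overline{z}^\star$,
\[
\dot V=(x-\overline{x}^\star)^\top\big(U(x)-U(\overline{x}^\star)\big)-\big(\sigma(z)-\sigma(\overline{z}^\star)\big)^\top(z-\overline{z}^\star).
\]
The first inner product is $\le 0$ by \autoref{assump:payoff_monotonicity} (anti‑monotonicity of $U$, applied to $x,\overline{x}^\star\in\Updelta^{n-1}$), and the subtracted bracket is $\ge 0$ by monotonicity of $\sigma$ (\autoref{prop:softmax_is_monotone}); hence $\dot V\le 0$ on all of $\mathbb{R}^n$, in particular on $\Omega$.

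The second step locates $E=\{z\in\Omega:\dot V(z)=0\}$. Equality forces the second term above to vanish; by co‑coercivity of $\sigma$ (\autoref{cor:softmax_is_co_coercive}) that term dominates $\tfrac1\lambda\|\sigma(z)-\sigma(\overline{z}^\star)\|_2^2$, so $\sigma(z)=\overline{x}^\star$ and therefore $z\in L$. On $L$ the vector field is $\dot z=U(\overline{x}^\star)-z=\overline{z}^\star-z$, whose only bounded complete orbit is the equilibrium $\overline{z}^\star$ itself, so the largest invariant set contained in $E$ is $\{\overline{z}^\star\}$ — equivalently, the hypothesis that the rest points are isolated, together with connectedness of $\omega$‑limit sets, pins the limit set down to a single rest point. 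LaSalle's invariance principle (\cite{Khalil}) then gives $z(t)\to\overline{z}^\star$, and continuity of $\sigma$ yields $x(t)=\sigma(z(t))\to\sigma(\overline{z}^\star)=\overline{x}^\star$, which is a logit equilibrium by \autoref{prop:qre}.

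The genuine difficulty I anticipate is the \emph{choice} of $V$ in the first step, not the LaSalle bookkeeping: the only information available on $U$ is anti‑monotonicity, an inequality living in the strategy variable $x\in\Updelta^{n-1}$, whereas the dynamics and the natural energy lie in the score variable $z\in\mathbb{R}^n$. The Bregman gap of $(\lse,\psi)$ is engineered precisely so that its gradient $\sigma(z)-\overline{x}^\star$ converts the $z$‑displacement $z-\overline{z}^\star$ into the $x$‑displacement $\sigma(z)-\overline{x}^\star$ on which \autoref{assump:payoff_monotonicity} can act, while the residual cross term collapses exactly to the monotonicity inequality for $\sigma$ from \autoref{prop:softmax_is_monotone}; threading the Fenchel conjugacy and the fixed‑point identity $\overline{u}^\star=\overline{z}^\star$ through this computation is where the algebra must be exact. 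Secondary obstacles are purely routine: checking that solutions are well defined and confined to a compact positively invariant set (continuity of $U\circ\sigma$ plus the a priori bound giving $\Omega$ take care of this) and justifying the reduction of the invariant set to a single point.
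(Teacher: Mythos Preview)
Your proof is correct and is essentially the paper's own argument: your Fenchel--Young gap $V(z)=\lse(z)-(\overline{x}^\star)^\top z+\psi(\overline{x}^\star)$ coincides (since $\overline{x}^\star=\nabla\lse(\overline{z}^\star)$ forces equality in Fenchel--Young at $\overline{z}^\star$) with the Bregman divergence $V_{\overline{z}^\star}(z)=\lse(z)-\lse(\overline{z}^\star)-\sigma(\overline{z}^\star)^\top(z-\overline{z}^\star)$ that the paper uses, and the subsequent $\dot V$ computation, the co-coercivity/anti-monotonicity split, the reduction on the line $\overline{z}^\star+\mathbb{R}\mathbf{1}$ to $\dot z=\overline{z}^\star-z$, and the LaSalle conclusion are all identical. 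The only cosmetic difference is that the paper applies co-coercivity immediately to obtain $\dot V\le-\tfrac{1}{\lambda}\|\sigma(z)-\sigma(\overline{z}^\star)\|_2^2$, whereas you first invoke monotonicity for $\dot V\le 0$ and then co-coercivity to pin down the zero set; both routes arrive at the same $\mathcal{E}$.
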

	
	\begin{proof}
		First, recall that solutions $z(t)$ of remain bounded and $\Omega = \{z \in \mathbb{R}^n|\|z\|_2 \leq \sqrt{n}M\}$ is a compact, positively invariant set. Let $z^\star$ be a rest point, $\overline{z}^\star = \overline{u}^\star = U(\sigma(\overline{z}^\star))$, $\overline{x}^\star = \sigma(\overline{z}^\star)$. 		
		
		Next, consider the Lyapunov function given by the Bregman divergence generated by the log-sum-exp function \eqref{def:log_sum_exp},
		\begin{equation}
		\label{eqn:lse_lyapunov_fcn}
		V_{\overline{z}^\star}(z) = \lse(z) - \lse(\overline{z}^\star) - \nabla\lse(\overline{z}^\star)^\top(z-\overline{z}^\star),
		\end{equation}
		 Recall that by \autoref{lem:logsumexpisconvex}, $\lse$ is convex and by \autoref{prop:gradient_of_lse_is_softmax}, $\nabla\lse(z) = \sigma(z)$. By convexity of $\lse$, $V_{\overline{z}^\star}(z) \geq 0, \forall z \in \mathbb{R}^n$. Using $\|\sigma(z)\|_1 = \sigma(z)^\top\mathbf{1} = 1$ and $\lse(z + \mathbf{1}c) = \lse(z) + c$, it can be shown that $V_{\overline{z}^\star}(\overline{z}^\star + \mathbf{1}c) = 0, \forall c \in \mathbb{R}$, so $V_{\overline{z}^\star}(\cdot)$ is positive semidefinite, but not positive definite. 
		
		Taking the time derivative of $V_{z^\star}(z)$ along the solution of \eqref{Exponential_Discount_Dynamic_2_Overall} yields,
		\begin{align*}
		\dot V_{\overline{z}^\star}(z) = & \nabla V_{\overline{z}^\star}(z)^\top\dot z \\ =& (\sigma(z) - \sigma(\overline{z}^\star))^\top(-z+u) \\
		= &(\sigma(z) - \sigma(\overline{z}^\star))^\top(-z + \overline{z}^\star - \overline{z}^\star + u) \\
		= & -(\sigma(z) - \sigma(\overline{z}^\star))^\top(z - \overline{z}^\star) +  (\sigma(z) - \sigma(\overline{z}^\star))^\top(u - \overline{u}^\star).
		\shortintertext{By \autoref{cor:softmax_is_co_coercive}, $\sigma$ is co-coercive, therefore,}
		\dot V_{\overline{z}^\star}(z)  \leq &   -\frac{1}{\lambda}\|\sigma(z) - \sigma(\overline{z}^\star)\|_2^2 + (\sigma(z) - \sigma(\overline{z}^\star))^\top(u - \overline{u}^\star).
		\end{align*}
		
		Since $u = U(\sigma(z)), \overline{u}^\star = U(\sigma(\overline{z}^\star))$, $x = \sigma(z)$, and $x^\star = \sigma(\overline{z}^\star)$, therefore \eqref{eqn:U_Anti_Monotone_Component} implies that $\dot V_{\overline{z}^\star}(z)  \leq   -\frac{1}{\lambda}\|\sigma(z) - \sigma(\overline{z}^\star)\|_2^2$, thus $\dot{V}_{\overline{z}^\star}(z) \leq 0, \forall z \in \mathbb{R}^n$, and $\dot V_{\overline{z}^\star}(z) = 0$, for all $z \in \mathcal{E} = \{z \in \Omega| \sigma(z) = \sigma(\overline{z}^\star)\}$. On $\mathcal{E}$ the dynamics of \eqref{Exponential_Discount_Dynamic_2_Overall} reduces to, 
		\[
		\dot z = U(\sigma(\overline{z}^\star)) - z = \overline{z}^\star - z.
		\]
		Therefore $z(t) \to \overline{z}^\star$ as $t \to \infty$, for any $z(0) \in \mathcal{E}.$ Thus, no other solution except $\overline{z}^\star$ can stay forever in $\mathcal{E}$, and the largest invariant subset $\mathcal{M} \subseteq \mathcal{E}$ consists only of equilibria. Since \eqref{Exponential_Discount_Dynamic_2_Overall} has a finite number of isolated equilibria $\overline{z}^\star$, by LaSalle's invariance principle \cite{Khalil}, it follows that for any $z(0) \in \Omega$, $z(t)$ converges to one of them. By continuity of $\sigma$, $x(t)$ converges to $\overline{x}^\star = \sigma(\overline{z}^\star)$ as $t \to \infty$. For an alternative proof using Barbalat's lemma, see \cite{Gao}. 		
	\end{proof} 
	\begin{figure}[h]
		\begin{center}
			\includegraphics[scale=0.6]{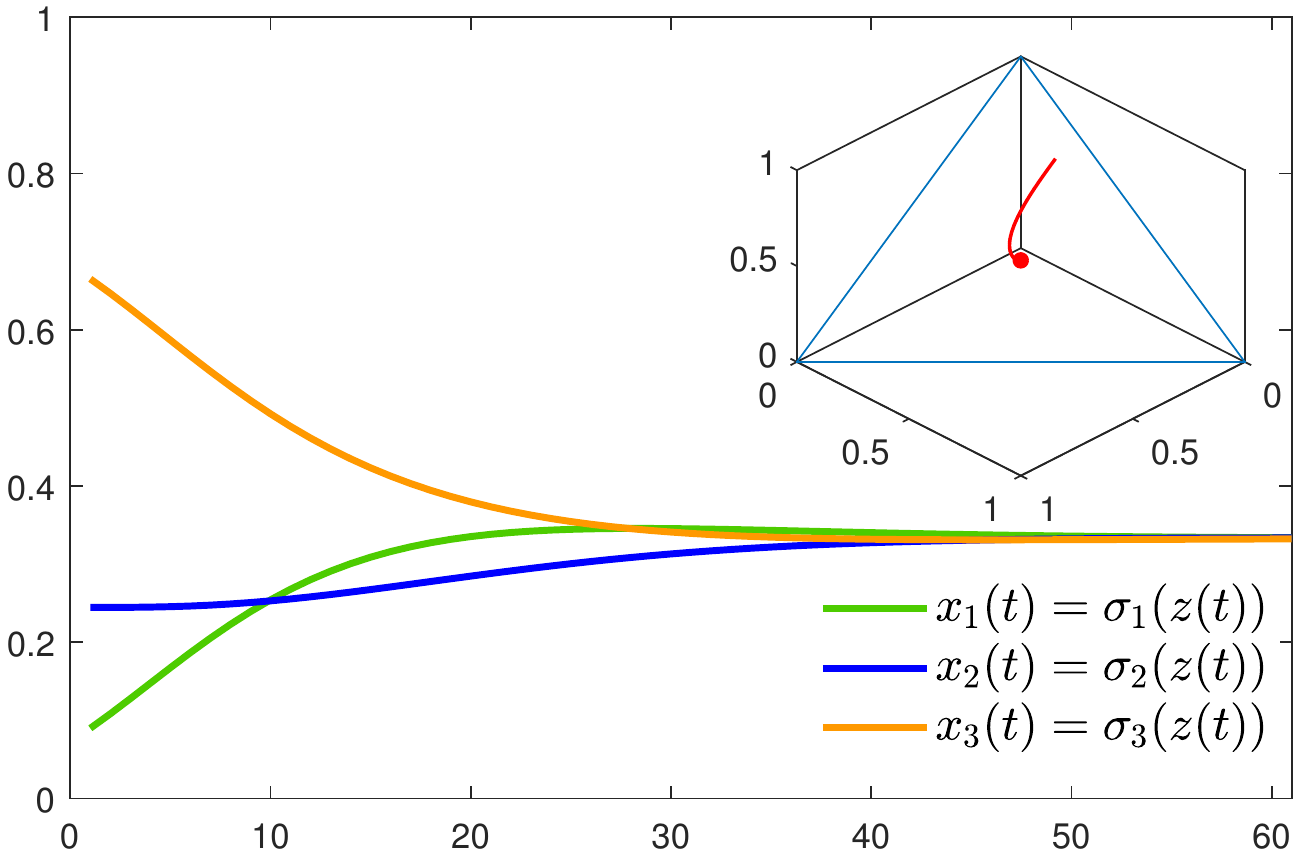}
		\end{center}
		\caption{Convergence of the induced strategy $x(t)$ towards the logit equilibrium of the standard RPS game. The red curve shows the evolution of the strategy in the interior of the simplex.}
		\label{fig:RPS_Game}
	\end{figure}
	\begin{example} 
	We note that \autoref{assump:payoff_monotonicity} is equivalent to game $\mathcal{G}$ being a \textit{stable game} \cite[p. 79]{Sandholm}, \cite{Stable}. The representative game from the class of stable games is the standard Rock-Paper-Scissors (RPS) game given by the payoff matrix,
	\begin{equation}
		\label{eqn:RPS_game}
		\mathbf{A} = \begin{bmatrix*}[r] 0 & -1 & 1 \\ 1 & 0 & -1 \\ -1 & 1 & 0 \end{bmatrix*},
	\end{equation}
	which generates the payoff vector $U(x) = \mathbf{A}x$. We present a simulation of the standard RPS game under the exponentially-discounted score dynamics \eqref{eqn:first_order_score_dyn} with $\lambda = 1$. The resulting induced strategy $x(t)$ is shown in \autoref{fig:RPS_Game}, which by \autoref{thm:stable_game_convergence} (which uses the co-coercivity property of the softmax function) is guaranteed to converge to the logit equilibrium of the RPS game, which is given by $\overline{x}^\star = \begin{bmatrix} 1/3 & 1/3 & 1/3 \end{bmatrix}^\top.$ In this game, the logit equilibrium coincides with the Nash equilibrium.  
	\end{example} 
	Next, we rely on a slightly modified result in \cite{Cominetti} to show that the Lipschitzness of the softmax function can be directly used to conclude the convergence of the score dynamics \eqref{eqn:first_order_score_dyn} for certain classes of games. 
	
		\begin{assumption}
		\label{assump:composite_map_contraction}
		$U \circ \sigma$ is $\|\cdot\|_\infty$-contractive, that is, there exists a constant $L \in (0,1)$ such that for all score variables $z, z^\prime \in \mathbb{R}^n$,
		\begin{equation}
		\|(U\circ\sigma)(z) - (U\circ\sigma)(z^\prime)\|_\infty \leq L \|z-z^\prime\|_\infty.
		\label{eqn:inf_contractive_assumption}
		\end{equation}
	\end{assumption}

	\begin{prop}{(Theorem 4, \cite{Cominetti})}
		\label{prop:cominetti_norm_contraction} Under \autoref{assump:composite_map_contraction}, the unique fixed point $\overline{z}^\star$ of $U\circ \sigma$ is globally asymptotically stable for \eqref{Exponential_Discount_Dynamic_2_Overall}.	Moreover, $x(t) = \sigma(z(t))$ converges to the logit equilibrium $\overline{x}^\star = \sigma(\overline{z}^\star)$ of the game $\mathcal{G}$.
	\end{prop}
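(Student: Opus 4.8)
The plan is to combine the Banach fixed-point theorem with a nonsmooth Lyapunov argument built on the $\|\cdot\|_\infty$ norm. First I would observe that \autoref{assump:composite_map_contraction} makes $U\circ\sigma$ a contraction on the complete metric space $(\mathbb{R}^n,\|\cdot\|_\infty)$, so it has a unique fixed point $\overline{z}^\star$; this $\overline{z}^\star$ is the unique equilibrium of \eqref{Exponential_Discount_Dynamic_2_Overall}, and by \autoref{prop:qre} the induced point $\sigma(\overline{z}^\star)=\overline{x}^\star$ is the logit equilibrium of $\mathcal{G}$. Since a contraction is in particular Lipschitz, the vector field $z\mapsto (U\circ\sigma)(z)-z$ of \eqref{Exponential_Discount_Dynamic_2_Overall} is globally Lipschitz, so solutions exist and are unique; forward completeness follows either from the positive invariance of the compact set $\Omega=\{z\mid\|z\|_2\le\sqrt{n}M\}$ noted earlier or directly from the estimate below.

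Next I would set $e(t)=z(t)-\overline{z}^\star$ and subtract the identity $0=(U\circ\sigma)(\overline{z}^\star)-\overline{z}^\star$ from \eqref{Exponential_Discount_Dynamic_2_Overall}, obtaining $\dot e = g(t)-e$, where $g(t)\coloneqq (U\circ\sigma)(z(t))-(U\circ\sigma)(\overline{z}^\star)$ satisfies $\|g(t)\|_\infty\le L\|e(t)\|_\infty$ by \autoref{assump:composite_map_contraction}. Take the candidate Lyapunov function $V(z)=\|z-\overline{z}^\star\|_\infty$, viewed as the maximum of the finitely many smooth functions $z_i-(\overline{z}^\star)_i$ and $-(z_i-(\overline{z}^\star)_i)$. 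Using Danskin's rule (equivalently, a direct upper Dini-derivative computation), along a solution the contribution of any active index $i$ to $D^+V(t)$ is bounded by $|g_i(t)|-V(t)\le\|g(t)\|_\infty-V(t)\le(L-1)V(t)$, with $L-1<0$; hence $D^+V(t)\le(L-1)V(t)$.

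By the comparison lemma for differential inequalities, $V(t)\le V(0)\,e^{(L-1)t}$ for all $t\ge 0$, so $z(t)\to\overline{z}^\star$ exponentially fast from every initial condition; since $V$ is a norm, this also yields Lyapunov stability, and therefore $\overline{z}^\star$ is globally asymptotically (indeed globally exponentially) stable for \eqref{Exponential_Discount_Dynamic_2_Overall}. Finally, because $\sigma$ is continuous --- in fact $\lambda$-Lipschitz by \autoref{prop:softmax_is_lipschitz} --- $x(t)=\sigma(z(t))\to\sigma(\overline{z}^\star)=\overline{x}^\star$, which proves the ``moreover'' claim.

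The main obstacle is the nonsmoothness of $V(z)=\|z-\overline{z}^\star\|_\infty$: the derivative-along-trajectories bound must be justified via the upper-right Dini derivative (or a Danskin/Clarke-subdifferential argument for the pointwise maximum of the $\pm(z_i-(\overline{z}^\star)_i)$), and the passage from $D^+V\le(L-1)V$ to the exponential bound requires the version of the comparison lemma valid for Dini derivatives rather than the classical one. Everything else --- the Banach contraction step, global existence of solutions, and transferring convergence from $z(t)$ to $x(t)$ --- is routine given the results already in hand.
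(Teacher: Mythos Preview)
Your argument is correct: the Banach fixed-point step, the error dynamics $\dot e = g(t)-e$ with $\|g(t)\|_\infty\le L\|e(t)\|_\infty$, the Dini-derivative bound $D^+V\le (L-1)V$ for $V=\|e\|_\infty$, and the comparison lemma together yield global exponential stability, and continuity of $\sigma$ transfers convergence to $x(t)$. The nonsmoothness caveat you flag is genuine but routine, and your handling of it (active-index/Danskin reasoning) is the standard one.

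There is, however, nothing to compare against: the paper does not supply its own proof of this proposition. It is simply quoted as Theorem~4 of \cite{Cominetti} and then paraphrased in the sentence that follows. So your write-up is not an alternative to the paper's argument but rather a self-contained justification the paper chose to omit. If anything, one could replace your Dini-derivative step by the equivalent variation-of-constants/Gronwall route (integrate $\dot e + e = g$, take $\|\cdot\|_\infty$, and apply Gronwall to $e^{t}\|e(t)\|_\infty$), which avoids the nonsmooth-Lyapunov machinery entirely and lands on the same bound $\|e(t)\|_\infty\le\|e(0)\|_\infty e^{(L-1)t}$; but this is a matter of taste, not substance.
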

	
	The above proposition essentially states that the convergence of the exponentially-discounted score dynamics \eqref{Exponential_Discount_Dynamic_2_Overall} relies on, individually, the Lipschitzness of the softmax function $\sigma$ and the game's payoff vector $U$. We illustrate this dependency using the following example. 
	
	\begin{example}
		By equivalence of norms, $\sigma$ is $\|\cdot\|_\infty$-contractive if $\sqrt{n}\lambda < 1$. Then for any game where the payoff vector $U$ is a $\|\cdot\|_\infty$-contraction, $U \circ \sigma$ is a $\|\cdot\|_\infty$-contraction. \autoref{prop:cominetti_norm_contraction} implies that the induced strategy $x(t) = \sigma(z(t))$ converges to the logit equilibrium $\overline{x}^\star \in \Updelta^{n-1}$.
	\end{example}
	
	\section{Conclusion and Open Problems}
	
	In this paper we have presented a thorough analysis of the softmax function using tools from convex analysis and monotone operator theory. We have shown that the softmax function is the monotone gradient map of the log-sum-exp function and that the inverse temperature parameter $\lambda$ determines the Lipschitz and co-coercivity properties of the softmax function. These properties allow for convenient constructions of convergence guarantees for score dynamics in general classes of games (see \cite{Gao}). We note that the structure of the reinforcement learning scheme is similar to those that arises in bandit and online learning (such as the Follow-the-Regularized-Leader (FTRL) and mirror descent algorithm \cite{Shalev}). We hope that researchers could adapt our results presented here and apply them to their domain-specific problems.

	Finally, for many applications in reinforcement learning, it is desirable to use a generalized version of the softmax function given by,
	 \begin{equation}
	 	\sigma_i(z) =  \frac{\exp(\lambda_i z_i)}{\textstyle\sum\limits_{j =1}^n \exp(\lambda_j z_j)}, 1 \leq i \leq n.
	 	\label{eqn:softmax_individual}
	 \end{equation}
	Here, each strategy $i$ is associated with an inverse temperature constant $\lambda_i > 0$, which can be adjusted independently to improve an agent's learning performance. The relationship between the individual parameters $\lambda_i$ with the convergence properties of score dynamics under the choice rule given by \eqref{eqn:softmax_individual} has been investigated in \cite{Cominetti} but is not yet fully characterized at this point.  It is of interest to extend the results presented in this paper for generalized versions of the softmax function \cite{Matrix_EXP} or adopt a monotone operator theoretic approach to analyze alternative forms of choice maps \cite{Asadi}.

\end{document}